\theoremstyle{plain}
    \newtheorem{thm}{Theorem}[section]
       \newtheorem{lem}{Lemma}[section]
       \newtheorem{defn}{Definition}[section]
\numberwithin{equation}{section}
\begin{document}
\title{Invariant manifolds for  stochastic delayed partial differential equations of parabolic type }
\author{Wenjie Hu$^{1,2}$,  Quanxin Zhu$^{1,3}$\footnote{Corresponding author.  E-mail address: zqx22@126.com (Quanxin Zhu)}, Tom\'{a}s
Caraballo$^{4}$.
\\
\small  1. The MOE-LCSM, School of Mathematics and Statistics,  Hunan Normal University,\\
\small Changsha, Hunan 410081, China\\
\small  2. Journal House, Hunan Normal University, Changsha, Hunan 410081, China\\
\small  3. the Key Laboratory of Control and Optimization of Complex Systems, College of Hunan Province,\\
\small Hunan Normal University, Changsha 410081, China.\\
\small 4 Dpto. Ecuaciones Diferenciales y An\'{a}lisis Num\'{e}rico, Facultad de Matem\'{a}ticas,\\
\small  Universidad de Sevilla, c/ Tarfia s/n, 41012-Sevilla, Spain
}

\date {}
\maketitle

\begin{abstract}
The aim of this paper is to prove  the existence  and  smoothness of stable and unstable invariant manifolds for a stochastic delayed partial differential equation of parabolic type. The stochastic delayed partial differential equation is  firstly transformed  into a random delayed partial differential equation by a conjugation, which is then recast into a Hilbert space. For the auxiliary equation, the variation of constants formula holds and we show the existence of Lipschitz continuous stable and unstable manifolds by the Lyapunov-Perron method. Subsequently, we prove the smoothness of these invariant manifolds under appropriate spectral gap condition by carefully investigating the smoothness of auxiliary equation, after which, we  obtain  the invariant manifolds of the original equation by projection and inverse transformation. Eventually, we illustrate the  obtained theoretical results by their application to a stochastic single-species population model.
\end{abstract}

\bigskip

{\bf Key words} {\em Invariant manifolds, stochastic partial differential equations, delay, random dynamical systems,
Lyapunov-Perron's method, smoothness}

\section{Introduction}
Invariant manifolds and invariant foliations  are of great significance  in the study of dynamical behavior and geometric properties of nonlinear dynamical systems. For example, the local bifurcation of a high-dimensional system can be studied by reducing it into a central manifold. The intersection of invariant stable manifolds and invariant unstable manifolds can be used to study the existence of homoclinic orbits and heteroclinic orbits, which is a key ingredient in  proving existence of chaotic attractors. The smooth conjugation between different flows can be given by invariant foliations.

Because of the significant roles the invariant manifolds  played in analyzing the dynamics of deterministic or random dynamical systems (RDSs), they have drawn vast attention from mathematicians in the past decades.  As to the pioneer works of invariant manifolds and invariant foliations for finite dimensional deterministic dynamical systems generated by ordinary differential equations or difference equations, the readers are refereed to  \cite{HPS77}. In  \cite{CL88,CLK88}, Chow and Lu extended the general framework for proving the existence and smoothness of  invariant manifolds and central unstable manifolds  for finite dimensional deterministic dynamical systems to semiflows in general Banach spaces generated by partial differential equations. Both papers \cite{CHT97} and  \cite{CLL91}  proved the existence of smooth invariant foliations in infinite dimensional spaces. In \cite{BLZ98} and \cite{BLZ00}, the authors further studied the geometric and analytical properties of invariant manifolds and invariant foliations of semiflows in Banach spaces, including persistence and uniform hyperbolicity.

For RDSs, Arnold \cite{AR98} summarized  early research results of finite dimensional RDSs, which includes basic theory, multiplicative ergodic theorem and smooth dynamical system theory (including invariant manifolds and bifurcation theory). Duan, Lu  and Schmalfuss \cite{DLS03,DLS04} obtained Lipschitzian invariant manifolds and smooth invariant manifolds of stochastic partial differential equations (SPDEs)  with additive or multiplicative noise by the Hadamard graph transformation method and the Lyapunov Perron method respectively. Lu and Schmalfuss \cite{LS07} studied the existence and smoothness of invariant manifolds for stochastic wave equations with a nonlinear multiplicative noise. Mohammed, Zhang and Zhao \cite{MZZ08} established local stable manifold theorems for semilinear stochastic evolution equations and SPDEs with additive or linear infinite dimensional noises by Ruelle's multiplicative ergodic theory in \cite{RD} and the saddle point property for hyperbolic linear random cocycles obtained  in \cite{M90} and  \cite{MS4}. Caraballo et. al. \cite{CDL09} applied the results in \cite{MZZ08} to random partial equations and SPDEs with linear multiplicative noise. For stochastic delayed differential equations (SDDEs) without spatial structure,  \cite{M90} and  \cite{MS4} established the local stable manifold theorems for linear SDDEs, while   \cite{MS2003} \cite{MS2004} obtained corresponding results for nonlinear SDDEs. Very recently, in \cite{GRS} and \cite{GR}, the authors established the multiplicative ergodic theory and local stable manifold theorems for a singular SDDE  with delay in the noise term by using Lyons's rough path theory.  With respect to the abstract RDSs, Li and Lu \cite{LL05} studied the smoothness of stable and unstable manifolds and the existence of stable and unstable invariant foliations for non-uniformly hyperbolic RDSs in finite dimensional space. Lian and Lu \cite{LL10} considered the existence and smoothness of Lyapunov exponents, stable and unstable manifolds of quasi non-uniform hyperbolic discrete RDSs in general Banach spaces. Shen, Lu and Zhang \cite{SLZ20} considered the existence of invariant manifolds, invariant foliations, H\"older smoothness and the stability of discrete RDSs in Hilbert Spaces.

Although the invariant manifolds and their smoothness  for deterministic and stochastic partial differential equations, SDDEs as well as abstract RDSs have been widely studied, the investigation of invariant manifolds for SPDEs with delay is scarce. Indeed, even for deterministic partial functional differential equations (PFDEs) with finite delays, the Lyapunov Perron method needs to be improved due to the lack of a general  variation of  constants formula. In \cite{NW04},  the authors established the general framework for proving existence of invariant manifolds for nonautonomous dynamical systems in Banach spaces by means of graph transformation, which is adopted to show  the existence of invariant manifolds for nonautonomous PFDEs. The authors further established a variation of  constants formula in \cite{HMN} for PFDEs with infinite delays, which was adopted to investigate invariant manifolds for PFDEs with infinite delays in \cite{MV} by the Lyapunov Perron method. However, the natural question how  to establish the Lyapunov Perron method to obtain existence and  smoothness of invariant manifolds for deterministic or stochastic PFDEs with finite delays  remains unsolved, which is the main object of this paper.

We consider the following stochastic retarded  partial differential equation with an additive noise
\begin{equation}\label{1}
 \displaystyle\frac{du(t)}{dt}=A u(t)-\mu u(t)+L u_{t} +f\left(u_{t}\right)+\sum_{j=1}^{m} g_{j}\frac{d w_{j}(t)}{dt}.
\end{equation}
Here, $u(t)\in \mathbb{X}$ and $\mathbb{X}$ is an arbitrary  Hilbert space with norm $\|\cdot\|_{\mathbb{X}}$ and inner product $(\cdot,\cdot)_{\mathbb{X}}$.  $A: D(A)\subset H\rightarrow H$ is a closed densely defined linear operator that generates a strongly continuous semigroup $S(t)$ on $\mathbb{X}$, $u_{t}$ is an element of $\mathcal{L}$ defined by $u_{t}(\xi)=$ $u(t+\xi)$ for $\xi \in [-\tau, 0]$, where $\mathcal{L}\triangleq L^2([-\tau,0], \mathbb{X})$ is the Hilbert space consisting  of all square Lebesgue  integrable functions from $[-\tau, 0]$ to $\mathbb{X}$ equipped with the  norm  $\|\varphi\|_{\mathcal{L}}=[\int_{-\tau}^0\|\varphi(s)\|_{\mathbb{X}}^2ds]^{1/2}$  for all $\varphi\in \mathcal{L}$.  $L: \mathcal{L} \mapsto \mathbb{X}$ is a bounded linear operator and $f : \mathcal{L} \mapsto \mathbb{X}$ is an everywhere defined Lipschitz continuous operator. $\{g_j\}_{j=1}^{m}$ with $g_j\in \mathbb{X}$ stands for the intensity and the shape of noise, $\{w_j\}_{j=1}^{m}$ are mutually independent two-sided real-valued Wiener process on an appropriate probability space to be specified below. We first  transform the delayed partial differential equations into  the following   random delayed partial differential equation  by a conjugation (The details of the derivation is given in Section 2).
\begin{equation}\label{2}
\displaystyle \frac{dv(t)}{dt}=\displaystyle  Av(t)-\mu v+L v_t +Lz(\theta_{t+\cdot}\omega)+f\left(v_t+z(\theta_{t+\cdot}\omega)\right)+Az(\theta_{t}\omega).
\end{equation}
For the above obtained pathwise deterministic delayed equation \eqref{2}, our method to overcome the obstacle caused by the lack of a variation of  constants formula is to take $V(t)=(v_t,v(t))$ and recast the equation in  an extended Hilbert space $H=\mathcal{L}\times \mathbb{X}$ equipped with the inner product
$$
((\phi, h),(\psi, k))=\int_{-\tau}^{0}(\phi(s), \psi(s))_{\mathbb{X}} ds+(h, k)_{\mathbb{X}} \quad \text { for } \quad(\phi, h),\{\psi, k\} \in H
$$
and norm
$$
\|(\phi, h)\|=((\phi, h),(\phi, h))^{1 / 2} \quad \text { for } \quad(\phi, h) \in H.
$$
Therefore, \eqref{2} can be regarded as a random partial differential equation on $H$ for which  an appropriate variation of  constants formula holds and we  investigate  the existence and smoothness of  stable and unstable invariant manifolds by the Lyapunov-Perron method established in  \cite{DLS04}. Then, the invariant manifolds of the original equations can be obtained  by a corresponding projection and the inverse random transformation.

 The  study of delay equations on the state space $X \times L^p([-\tau,0], X)$ with $X=\mathbb{C}^n$ went back to the early work of   Coleman and Mizel \cite{CM} and was then developed by many authors. See the monograph \cite{BP} for a comprehensive summary.  This  idea of recasting functional differential equations in product space has been adopted by Mohammed and Scheutzow to study the Lyapunov spectrum and stable manifolds theorem for linear or nonlinear SDDEs in \cite{M90,MS4, MS2003,MS2004}. The equations they studied do not have spatial structure and  they obtained the existence of invariant manifolds by the multiplicative ergodic theory established in \cite{RD}. Our work makes an attempt  to extend the idea of recasting retarded SPDEs in the auxiliary Hilbert space to establish the Lyapunov Perron method for analyzing existence and smoothness of invariant manifolds. Since we establish  the  existence and smoothness of   invariant manifolds for the auxiliary equation \eqref{2.4} (see Section 3) and then obtain the invariant manifold of the original equation by projection, our method successfully avoid variation of  constants formula of SPDEs with finite delays, which is also effective for deterministic PFDEs. Compared with the Lyapunov Perron method established in \cite{MS2003} and  \cite{MS2004} for SPDEs without delay, the exponential dichotomy of the semigroup generated by the linear part of the auxiliary equation \eqref{2.4} is quite different, and the construction of invariant manifolds for the original equation \eqref{1} from the auxiliary equation \eqref{2.4} needs much effort.   Moreover, recasting  stochastic or deterministic PFDEs in Hilbert space also facilitates one to tackle the problems requiring Hilbert geometric structure, such as the topological dimensions estimation of attractors.

The remaining part of this paper is organized as follows. In Section 2, we first introduce the theory of RDSs  as well as random invariant manifolds and then recast  \eqref{2} into the Hilbert space $H$, which facilitate us to treat \eqref{2} as a SPDE without delay on $H$ that generates a RDS. In Section 3, we show the existence of Lipschitz continuous stable  manifolds by the Lyapunov-Perron method.  In Section 4, we prove the smoothness of the stable manifolds as well as the existence and smoothness of  unstable manifolds. In Section 5, we apply the results to a stochastic single-species population model.

\section{Preliminaries and the auxiliary equation}
 In this section, we first introduce the concept of  RDSs following \cite{AR98} and the concept of random invariant manifolds following \cite{DLS04}. Moreover, we show  the auxiliary random partial differential equation (RPDE)  generating a RDS.
\begin{defn}\label{defn1}
Let $\left\{\theta_{t}: \Omega \rightarrow \Omega, t \in \mathbb{R}\right\}$ be a family of measure preserving transformations such that $(t, \omega) \mapsto \theta_{t} \omega$ is measurable and $\theta_{0}=\mathrm{id}$, $\theta_{t+s}=\theta_{t} \theta_{s},$ for all $s, t \in \mathbb{R}$. The flow $\theta_{t}$ together with the probability space $\left(\Omega, \mathcal{F}, P,\left(\theta_{t}\right)_{t \in \mathbb{R}}\right)$ is called a metric dynamical system.
\end{defn}
 For a given separable complete metric space $(X, \|\cdot\|_X)$,  denote by $\mathcal{B}(X)$ the  Borel-algebra of open subsets in $X$.
\begin{defn}\label{defn2}
A mapping $\Phi: \mathbb{R}^{+} \times \Omega \times X   \rightarrow X$ is said to be a random dynamical system (RDS) on a complete separable metric space  $(X,d)$ with Borel  $\sigma$-algebra  $\mathcal{B}(X)$ over the metric dynamical system $\left(\Omega, \mathcal{F}, P,\left(\theta_{t}\right)_{t \in \mathbb{R}}\right)$ if \\
(i) $\Phi(\cdot, \cdot, \cdot): \mathbb{R}^{+} \times \Omega \times X   \rightarrow X$ is $(\mathcal{B}(\mathbb{R}^{+})\times \mathcal{F}\times\mathcal{B}(X), \mathcal{B}(X))$-measurable;\\
(ii) $\Phi(0, \omega,\cdot)$ is the identity on  $X$ for $P$-a.e. $\omega \in \Omega$;\\
(iii) $\Phi(t+s, \omega,\cdot)=\Phi(t, \theta_{s} \omega,\cdot) \circ \Phi(s, \omega,\cdot),   \text { for all } t, s \in \mathbb{R}^{+}$ for $P$-a.e. $\omega \in \Omega$.\\
A RDS $\Phi$ is continuous or differentiable if $\Phi(t, \omega,\cdot): X \rightarrow X$ is continuous or differentiable for all $t\in \mathbb{R}^+$ and $P$-a.e. $\omega \in \Omega$.
\end{defn}

In this paper, we consider the canonical probability space $(\Omega, \mathcal{F}, P)$  with
$$
\Omega=\left\{\omega=\left(\omega_{1}, \omega_{2}, \ldots, \omega_{m}\right) \in C\left(\mathbb{R} ; \mathbb{R}^{m}\right): \omega(0)=0\right\}
$$
and $\mathcal{F}$ being the Borel $\sigma$-algebra induced by the compact open topology of $\Omega,$ while $P$ being the corresponding Wiener measure on $(\Omega, \mathcal{F})$. Then, we identify $W(t)$ with
$$
W(t,\omega)\equiv \left(\omega_{1}(t), \omega_{2}(t), \ldots, \omega_{m}(t)\right)\quad \text { for } t \in \mathbb{R}.
$$
and the time shift by $$\theta_{t} \omega(\cdot)=\omega(\cdot+t)-\omega(t), t \in \mathbb{R}.$$
It follows from Definition \ref{defn1} that  $\left(\Omega, \mathcal{F}, P,\left(\theta_{t}\right)_{t \in \mathbb{R}}\right)$ is a metric dynamical system.

In the sequel, we follow the idea of \cite{DLS03} to transform  \eqref{1} into a pathwise deterministic equation. The same idea has been adopted by many authors to deal with invariant manifolds for various stochastic evolution equations, such as \cite{DLS04,LS07}.
Consider the stochastic stationary solution of the one dimensional Ornstein-Uhlenbeck equation
\begin{equation}\label{2.1}
\mathrm{d} z_{j}+\mu z_{j} \mathrm{d} t=\mathrm{d} w_{j}(t), j=1, \ldots, m,
\end{equation}
which is given by
\begin{equation}\label{2.2}
z_{j}(t) \triangleq z_{j}\left(\theta_{t} \omega_{j}\right)=-\mu \int_{-\infty}^{0} e^{\mu s}\left(\theta_{t} \omega_{j}\right)(s) \mathrm{d} s, \quad t \in \mathbb{R}.
\end{equation}
Putting $z\left(\theta_{t} \omega\right)=\sum_{j=1}^{m} g_{j} z_{j}\left(\theta_{t} \omega_{j}\right)$, we have
$$
\mathrm{d} z+\mu z \mathrm{d} t=\sum_{j=1}^{m} g_{j} \mathrm{d} w_{j}.
$$
 Take the transformation $v(t)=u(t)-z\left(\theta_{t} \omega\right)$. Then,  simple computations imply
\begin{equation}\label{2.3}
\displaystyle \frac{dv(t)}{dt}=\displaystyle  Av(t)-\mu v(t)+L v_t +Lz(\theta_{t+\cdot}\omega)+f\left(v_t+z(\theta_{t+\cdot}\omega)\right)+Az(\theta_{t}\omega).
\end{equation}
Next we consider the linear part of  \eqref{2.3}, i.e. the following linear  partial functional differential equation
\begin{equation}\label{2.3b}
\displaystyle \frac{d\tilde{v}(t)}{dt}=\displaystyle  A\tilde{v}(t)-\mu \tilde{v}(t)+L \tilde{v}_t.
\end{equation}
For the linear operator $L: \mathcal{L} \mapsto \mathbb{X}$ in \eqref{2.3}, define $L_\lambda \in \mathcal{L}(\mathbb{X})$ by
 \begin{equation}\label{2.3f}
\displaystyle L_\lambda x:=L\left(e^{\lambda \cdot} x\right)
\end{equation}
for $x \in \mathbb{X}$. Throughout the remaining part of this paper, we always make the following assumptions on $A$ and $L$.

 $\mathbf{Hypothesis \    A1.}$ $A: D(A) \subset \mathbb{X} \rightarrow \mathbb{X}$ is a densely defined linear operator  that  generates a strongly continuous compact semigroup $S(t)$ on $\mathbb{X}$. $L: \mathcal{L} \mapsto \mathbb{X}$ is a bounded linear operator.

By Theorem 2.4(iii) in \cite{WJ}, \eqref{2.3b} admits a global  classical solution $\tilde{v}(t)$ for any $t\geq 0$.
Set
$$
\tilde{V}: t \mapsto\left(\begin{array}{c}
\tilde{v}_t\\
\tilde{v}(t)
\end{array}\right) \in H.
$$
Then, it follows from Corollary 3.5 in \cite{BP} that  $\tilde{V}:\mathbb{R}_{+}\rightarrow H$ is continuously differentiable with derivative
$$
\dot{\tilde{V}}(t)=\tilde{A} \tilde{V}(t),
$$
where
$$
\tilde{A}:=\left(\begin{array}{cc}
\frac{d}{d t} & 0 \\
L & A-\mu I
\end{array}\right),
$$
with domain
$$
\begin{aligned}
D(\tilde{A})=\{(\phi, h) \in H: \phi & \text { is differentialble on }[-\tau, 0],  \dot{\phi} \in \mathcal{L} \text { and } h=\phi(0) \in D(A) \}.
\end{aligned}
$$
Denote by
$$
\tilde{A}_0:=\left(\begin{array}{cc}
\frac{d}{d t} & 0 \\
0 & A-\mu I
\end{array}\right)
$$
with domain $D(\tilde{A}_0)=D(\tilde{A})$. It follows from $\mathbf{Hypothesis \    A1}$, Lemma 3.6, Theorem 3.25 in \cite{BP} that the operator $(\tilde{A}_0,D(\tilde{A}_0))$ is closed and densely defined on $H$ and generates a strongly continuous semigroup $T(t)$ given by
$$
T(t):=\left(\begin{array}{cc}
T_0(t) & S_t \\
0 & S(t)
\end{array}\right),
$$
where $\left(T_0(t)\right)_{t \geq 0}$ is the nilpotent left shift semigroup on $\mathcal{L}$ and $S_t: \mathbb{X} \rightarrow \mathcal{L}$ is defined by
$$
\left(S_t x\right)(\xi):= \begin{cases}S(t+\xi) x & \text { if }-t<\xi \leq 0, \\ 0 & \text { if }-\tau \leq \xi \leq-t .\end{cases}
$$

To guarantee that $(\tilde{A},D(\tilde{A}))$ generates a strongly continuous semigroup, we need to impose the following assumption on $L$, which was introduced in \cite{BP}.

$\mathbf{Hypothesis \    A2.}$
There exists $q: \mathbb{R}_{+} \longrightarrow \mathbb{R}_{+}$with $\lim _{t \rightarrow 0^{+}} q(t)=0$ and
$$
\int_0^t\left\|L\left(S_s \phi+T_0(s) h\right)\right\| d s \leq q(t)\left\|\left(\begin{array}{l}
\phi \\
h
\end{array}\right)\right\|
$$
for all $\left(\begin{array}{l}\phi \\ h\end{array}\right) \in D\left(\tilde{A}_0\right)$ and $t>0$.

It follows from Theorem 3.26 and 4.11 in \cite{BP} that $(\tilde{A},D(\tilde{A}))$ generates a strongly continuous semigroup $\tilde{S}(t): H \rightarrow H$ such that
$$
\|\tilde{S}(t)\|\leqslant e^{\varrho t}, \quad t \geqslant 0,
$$
under assumptions $\mathbf{Hypothesis \    A1}$ and $\mathbf{Hypothesis \    A2}$, where $\varrho\triangleq s(A-\mu I+L_\lambda)=\sup \{\Re \lambda: \lambda \in \sigma(A-\mu I+L_\lambda)\}$ is the spectral bound of the linear operator $A-\mu I+L_\lambda$.

In order to show that $\tilde{S}(t)$ satisfies  a  pseudo exponential dichotomy condition, we further impose the following assumption. \\
$\mathbf{Hypothesis \    A3.}$   Assume there exist  $\beta<\alpha$ such that the spectrum of  $A$ and $L$ satisfy
\begin{equation}\label{3a}
\begin{aligned}
\sigma(A-\mu I+L_\lambda)\cap(\beta,\alpha)=\varnothing,
\end{aligned}
\end{equation}
where $L_\lambda$ is defined by \eqref{2.3f}.

Now we prove an exponential  dichotomy result. It says that, if we assume the same compactness on the semigroup generated by $\tilde{A}, D(\tilde{A})$ as before, then the space $H$ naturally decomposes into two subspaces that are invariant under the delay semigroup.

By similar methods in \cite[Theorem 2.3]{ZW}  and the original work \cite{D81}, we show in the following  that under $\mathbf{Hypothesis \    A3}$ the semigroup $\tilde{S}(t)$ satisfies  a  pseudo exponential dichotomy condition.
 \begin{lem} \label{lem3.1}
Assume that $\mathbf{Hypothesis \    A3}$ holds, then there exists a continuous projection $P^{u}$ on $H$ such that\\
(i) $\quad P^{u} \tilde{S}(t)=\tilde{S}(t)P^{u}$;\\
(ii) the restriction $ \tilde{S}(t)|_{H^{u}}: H^{u}\rightarrow H^{u}, t \geq 0$  is an isomorphism.\\
(iii)
\begin{equation}\label{3}
\begin{aligned}
\left\|\tilde{S}(t)P^{u} x\right\| & \leq K e^{\alpha t}\|x\|, & & t \leqslant 0, \\
\left\|\tilde{S}(t)P^{s} x\right\| & \leq K e^{\beta t}\|x\|, & & t \geq 0,
\end{aligned}
\end{equation}
where $P^{s}=I-P^{u}$, $H^{s}=P^{s} H$ and $H^{u}=P^{u}H.$ Obviously, $H =H ^{u} \oplus$ $H ^{s}$. We will call $H^{s}$ and $H^{u}$ the stable subspace and the unstable subspace of $\tilde{S}(t)$  respectively.
\end{lem}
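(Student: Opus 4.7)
The plan is to mimic the classical Dunford/Riesz spectral-projection construction for delay semigroups, exploiting the fact that under Hypotheses A1--A2 the semigroup $\tilde{S}(t)$ is eventually compact. First I would identify $\sigma(\tilde{A})$. A standard computation with the characteristic operator for product-space delay equations (see \cite{BP}) shows that a complex number $\lambda$ lies in $\sigma(\tilde{A})$ if and only if $\lambda \in \sigma(A-\mu I + L_\lambda)$, because solving $(\lambda I - \tilde{A})(\phi,h)^T = (\psi,k)^T$ reduces, via $\phi(\xi)=e^{\lambda\xi}h + \int_\xi^0 e^{\lambda(\xi-s)}\psi(s)\,ds$, to inverting $\lambda I - (A-\mu I) - L_\lambda$ on $\mathbb{X}$. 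Hence the spectral gap in Hypothesis A3 lifts verbatim to a gap $\sigma(\tilde{A})\cap(\beta,\alpha)=\varnothing$.

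Next I would use compactness. Since $S(t)$ is compact by Hypothesis A1 and $L$ is controlled by Hypothesis A2, the operator $\tilde{S}(t)$ is compact for $t>\tau$ (the shift part vanishes after time $\tau$ and the $S_t, S(t)$ blocks are compact). Eventual compactness implies that $\sigma(\tilde{A})$ is a pure point spectrum of finite-multiplicity eigenvalues, with only finitely many in any half-plane $\{\Re\lambda\geq\gamma\}$. Consequently
\begin{equation*}
\sigma_u := \sigma(\tilde{A})\cap\{\Re\lambda\geq\alpha\}
\end{equation*}
is a finite set separated from $\sigma_s := \sigma(\tilde{A})\cap\{\Re\lambda\leq\beta\}$ by the strip $\beta<\Re\lambda<\alpha$.

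I would then define the spectral projection along a closed rectifiable curve $\Gamma$ lying entirely in the strip and enclosing $\sigma_u$,
\begin{equation*}
P^u := \frac{1}{2\pi i}\int_{\Gamma}(\lambda I - \tilde{A})^{-1}\,d\lambda,
\end{equation*}
and set $P^s = I - P^u$, $H^u = P^u H$, $H^s = P^s H$. General operator theory yields (i) $P^u \tilde{S}(t) = \tilde{S}(t)P^u$, because $\tilde{S}(t)$ commutes with the resolvent of $\tilde{A}$. Since $H^u$ is finite-dimensional (as $\sigma_u$ is a finite set of eigenvalues of finite multiplicity), the restriction $\tilde{A}|_{H^u}$ is a bounded operator with spectrum $\sigma_u$, so $\tilde{S}(t)|_{H^u}=e^{t\tilde{A}|_{H^u}}$ extends to a group, giving (ii).

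The main obstacle, and the step I would spend the most effort on, is the pair of exponential estimates in (iii). On $H^u$ the estimate is elementary: $\tilde{A}|_{H^u}$ acts on a finite-dimensional space with $\Re\sigma(\tilde{A}|_{H^u})\geq\alpha$, so the Jordan form yields $\|\tilde{S}(t)P^u x\|\leq K e^{\alpha t}\|x\|$ for $t\leq 0$. On $H^s$ the challenge is that the spectral mapping theorem can fail for general $C_0$-semigroups, so I cannot simply invoke $\sigma(\tilde{S}(t)|_{H^s}) \subset e^{t\sigma_s}$. I would circumvent this by using eventual compactness of $\tilde{S}(t)$: for compact (hence Riesz) semigroups the spectral mapping theorem does hold, which gives the growth bound of $\tilde{S}(t)|_{H^s}$ equal to $\sup\{\Re\lambda : \lambda\in\sigma_s\}\leq\beta$. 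Choosing $K$ large enough to absorb the constant in the spectral radius formula yields $\|\tilde{S}(t)P^s x\|\leq K e^{\beta t}\|x\|$ for $t\geq 0$, completing (iii). This parallels the route taken in \cite[Theorem 2.3]{ZW} and \cite{D81}, adapted to our product-space generator $\tilde{A}$.
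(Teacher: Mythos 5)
Your construction is correct and follows the same overall strategy as the paper: identify $\sigma(\tilde{A})$ with $\sigma(A-\mu I+L_\lambda)$ (Proposition 3.19 of \cite{BP}), use the spectral gap of Hypothesis A3 together with the fact that $\sigma_{+}(\tilde{A})$ is a finite set of finite-multiplicity eigenvalues to split $H=H^{u}\oplus H^{s}$ by a Riesz spectral projection, and read off (i) and (ii) from finite-dimensionality of $H^{u}$. Where you genuinely diverge is in how you obtain the decay estimate on the stable part in (iii). The paper asserts that $\tilde{A}$ inherits sectoriality from $A$ and then invokes Theorems 1.5.2--1.5.3 of \cite{D81}, which package the dichotomy estimates for analytic semigroups. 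You instead argue via eventual compactness of $\tilde{S}(t)$ (for $t>\tau$ the shift block is nilpotent and the remaining blocks are compact by Hypothesis A1), which puts you in the class of eventually norm-continuous semigroups where the spectral mapping theorem holds, so the growth bound of $\tilde{S}(t)|_{H^{s}}$ equals its spectral bound $\leq\beta$. This route is actually the more robust one for delay generators: the block $T_{0}(t)$ is a nilpotent left shift, so $\tilde{S}(t)$ is not analytic in general and the paper's claim that $\tilde{A}$ is sectorial deserves scrutiny, whereas eventual compactness is exactly what \cite{BP} establishes for this product-space setting. One caveat applies to both arguments: if eigenvalues sit exactly on the lines $\Re\lambda=\alpha$ or $\Re\lambda=\beta$, Jordan blocks produce polynomial factors and the clean bounds $Ke^{\alpha t}$, $Ke^{\beta t}$ require nudging $\alpha,\beta$ slightly into the open gap; neither you nor the paper addresses this, and it is standard to absorb it by a small adjustment of the exponents.
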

\begin{proof}
By Proposition 3.19 in \cite{BP}, $\tilde{A}$ has compact resolvent and $\sigma(\tilde{A})=\sigma(A-\mu I+L_\lambda)$.  As proved in Theorem 4.15  the
operator $(\tilde{A}, D(\tilde{A}))$ has pure point spectrum with finite-dimensional spectral subspaces under $\mathbf{Hypothesis \    A1-A3}$. We set
$\sigma_{+}(\tilde{A})=\sigma(\tilde{A}) \cap\{\lambda \in \mathbb{C} \mid \operatorname{Re} \lambda>\alpha\}, \sigma_{-}(\tilde{A})=\sigma(\tilde{A}) \cap\{\lambda \in \mathbb{C} \mid \operatorname{Re} \lambda<\beta\}.$
Obviously
$$
\sigma(\tilde{A})=\sigma_{+}(\tilde{A}) \cup \sigma_{-}(\tilde{A}), \sigma_{+}(\tilde{A}) \cap \sigma_{-}(\tilde{A})=\phi_{\text {. }}
$$
Since $\sigma_{+}(\tilde{A})$ is bounded, and both $\sigma_{+}(\tilde{A})$ and $\sigma_{-}(A) \cup\{\infty\}$ are close, Theorem 1.5.2 in \cite{D81} implies that there are projections $H^u$ and $H^s$ such that
$$
H=H^{u} \oplus H^{s},
$$
where $H^{u}=P^{u}H$ and $H^{s}=P^{s}H$ are $\tilde{A}$-invariant subspaces. Since $A$ is sectorial operator, one can check that $\tilde{A}$ is also a sectorial operator and hence it follows from Theorem 1.5.3 in \cite{D81} that statement (ii) holds.
\end{proof}

As defined in \cite{DLS03}, a multifunction $M=\{M(\omega)\}_{\omega \in \Omega}$ of nonempty closed sets $M(\omega), \omega \in \Omega$, contained in a complete separable metric space $\left(\mathcal{L}, d_{H}\right)$ is called a random set if
$$
\omega \rightarrow \inf _{y \in M(\omega)} \mathrm{d}_H(x, y)
$$
is a random variable for any $x \in H$.

A random set $M(\omega)$ is called an invariant set for a RDS $\Phi(t, \omega, x)$ if we have
$$
\Phi(t, \omega, M(\omega)) \subset M\left(\theta_{t} \omega\right) \text { for } t \geqslant 0.
$$

Moreover, if   $M$ can be represented by a graph of a $C^{k}$ (or Lipschitz) mapping
$$
h^{s}(\cdot, \omega): H^{s} \rightarrow H^{u}
$$
such that
$$
M(\omega)=M^{s}(\omega)=\left\{\xi+h^{s}(\xi, \omega) \mid \xi \in H^{s}\right\},
$$
then, we call $M^{s}(\omega)$ a $C^{k}$ (or Lipschitz) stable manifold. Here, $H^{s}$ is the stable subspace and $H^{u}$ is the unstable subspace of \eqref{2}, which are introduced in the paragraph following \eqref{3}. Conversely, if   $M$ can be represented by a graph of a $C^{k}$ (or Lipschitz) mapping
$$
h^{u}(\cdot, \omega): H^{u} \rightarrow H^{s}
$$
such that
$$
M(\omega)=M^{u}(\omega)=\left\{\xi+h^{u}(\xi, \omega) \mid \xi \in H^{u}\right\},
$$
then, we call $M^{u}(\omega)$ a $C^{k}$ (or Lipschitz) unstable manifold.

\section{Lipschitz stable  manifolds}
This section is devoted to obtaining  the existence of a Lipschitz continuous stable manifold for retarded SPDE \eqref{1}. Our main idea is first to show the existence of Lipschitz stable  manifolds for \eqref{2} by investigating \eqref{2.4} and then obtain Lipschitz stable  manifolds of \eqref{1} by the inverse conjugation.

To reformulate \eqref{2.3} as a random  partial differential equation in the Hilbert space $H$, we take $V(t)=(v_t, v(t))^T$, $\tilde{f}(t, \theta_{t}\omega,V(t))\triangleq Lz(\theta_{t+\cdot}\omega)+f\left(v_t+z(\theta_{t+\cdot}\omega)\right)+Az(\theta_{t}\omega)$ and $F(t, \theta_{t}\omega,V(t))=(0, \tilde{f}(t,\theta_{t}\omega,V(t)))$. We consider the following auxiliary random partial differential equation on $H$.
\begin{equation}\label{2.4}
\left\{\begin{array}{l}
\displaystyle \frac{dV(t)}{dt}=\displaystyle  \tilde{A}V(t)+F(t, \theta_{t}\omega, V(t)),\\
V(0)=(\phi,h), \quad(\phi, h) \in H,
\end{array}\right.
\end{equation}

Throughout the remaining part of this paper, we always make the following assumptions on  the nonlinear term $f$:

$\mathbf{Hypothesis \  A4}$ $f$ is Lipschitz continuous with $\mathbf{0}$ being a fixed point, that is, $f(\mathbf{0})=\mathbf{0}$ and $\|f(\phi)-f(\varphi)\|_{\mathbb{X}}\leq L_f\|\phi-\varphi\|_{\mathcal{L}}$ for any $\phi, \varphi\in \mathcal{L}$.

Since for $P$-a.e. $\omega \in \Omega$, \eqref{2.4} is a path-wise deterministic equation, it follows from Pazy \cite{PA}  that \eqref{2.4} admits a global mild solution which can be represented by a integral equation based on the variation of  constants formula.

\begin{lem} \label{lem2.1}
Assume that $\mathbf{Hypothesis \    A1-A4}$ hold. Then, for each $(\phi, h) \in H$, there exists a continuous function $V(\cdot,\omega, (\phi, h)):[0, \infty) \rightarrow H$ such that
\begin{equation}\label{2.5}
V(t,\omega, (\phi, h))=\tilde{S}(t)(\phi, h)+\int_{0}^{t} \tilde{S}(t-s)F(s, \theta_{s}\omega, V(s,\omega, (\phi, h))) d s, \quad t \geqslant 0
\end{equation}
for $P$-a.e. $\omega \in \Omega$.
Moreover, if $(\phi, h) \in D(\tilde{A})$, then $V(t,\omega, (\phi, h))$ is a strong solution of  \eqref{2.4} for $P$-a.e. $\omega \in \Omega$.
\end{lem}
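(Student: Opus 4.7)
The plan is to reduce the lemma to a standard semilinear evolution equation result from Pazy by checking the regularity of the nonlinearity $F(t,\theta_t\omega,\cdot)$ on the Hilbert space $H$, and then run the usual contraction/continuation argument against the semigroup $\tilde S(t)$.

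First I would fix $\omega$ in a set of full measure on which the Ornstein--Uhlenbeck trajectory $t\mapsto z(\theta_t\omega)$ is continuous in $\mathbb{X}$; by the shift structure, $t\mapsto z(\theta_{t+\cdot}\omega)$ is then also continuous as an element of $\mathcal{L}$. Using Hypothesis~A1 (boundedness of $L$), Hypothesis~A4 (global Lipschitz property of $f$ with $f(\mathbf 0)=\mathbf 0$), and the tacit requirement $g_j\in D(A)$ (so that $Az(\theta_t\omega)$ makes sense and is continuous in $t$), one verifies that $(t,V)\mapsto F(t,\theta_t\omega,V)$ is continuous in $t$, and globally Lipschitz in $V\in H$ with a constant independent of $t$ on any bounded interval (in fact the Lipschitz constant reduces to $L_f$, since only the $f$-term depends on $V$). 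These are exactly the hypotheses needed to apply the semilinear existence theory in Pazy (Chapter~6).

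Next, to obtain the mild solution, I would set up the fixed-point map
\begin{equation*}
(\mathcal{K}V)(t)=\tilde S(t)(\phi,h)+\int_0^t\tilde S(t-s)F(s,\theta_s\omega,V(s))\,ds
\end{equation*}
on $C([0,T];H)$. Combining $\|\tilde S(t)\|\le e^{\varrho t}$ with the Lipschitz bound on $F$ gives the standard estimate that makes $\mathcal{K}$ a contraction for $T$ small enough, yielding a unique local mild solution; uniqueness and a Gronwall estimate then produce a maximal mild solution on some interval $[0,T_{\max})$. To upgrade this to a global solution, I would rule out blow-up using the fact that $\|F(t,\theta_t\omega,V)\|\le L_f\|V\|+c(t,\omega)$ where $c(\cdot,\omega)\in L^1_{\mathrm{loc}}$ (coming from the continuous terms $Lz(\theta_{t+\cdot}\omega)$ and $Az(\theta_t\omega)$); Gronwall's inequality then bounds $\|V(t,\omega,(\phi,h))\|$ on every compact interval, forcing $T_{\max}=\infty$. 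This delivers the integral identity \eqref{2.5}.

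For the final claim, when $(\phi,h)\in D(\tilde A)$ I would invoke the classical regularity upgrade (Pazy, Theorem~6.1.5 or Theorem~4.2.4): if additionally $t\mapsto F(t,\theta_t\omega,V)$ is locally H\"older continuous (respectively locally Lipschitz) uniformly on bounded sets in $V$, then the mild solution through a point in $D(\tilde A)$ is a strong solution. H\"older regularity of $t\mapsto z(\theta_t\omega)$ for a.e.\ $\omega$ (inherited from the H\"older continuity of Brownian paths) and the Lipschitz dependence on $V$ deliver exactly this, completing the proof. The only genuinely delicate point is the term $Az(\theta_t\omega)$, whose meaning requires $g_j\in D(A)$ and whose H\"older regularity in $t$ reduces, via closedness of $A$, to that of $z(\theta_t\omega)$; once this is noted the argument is entirely standard.
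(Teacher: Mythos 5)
Your proposal is correct and follows essentially the same route as the paper, which simply invokes the pathwise deterministic character of \eqref{2.4} and cites Pazy's semilinear theory for the global mild solution and the strong-solution upgrade on $D(\tilde A)$; you have merely written out the standard contraction, Gronwall, and H\"older-regularity steps that this citation compresses. Your observation that $g_j\in D(A)$ is tacitly needed for $Az(\theta_t\omega)$ to make sense is a fair and useful remark, but it does not change the argument.
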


By Theorem 3.1 and Proposition 3.2 in  \cite{SW91}, we have the following  relationship between solutions of \eqref{2.3} and \eqref{2.4}.
\begin{lem} \label{lem2.3}
Let $P_{1}$ and $P_{2}$ be the projections of $H$ onto $\mathcal{L}$ and $\mathbb{X}$ respectively. Then, the following statements are true.\\
(i) $T(t) h=P_{2} \tilde{S}(t)(\phi, h)$ for all $(\phi, h) \in H$, where $T(t)$ is the semigroup generated by $A-\mu I$ on $X$.\\
(ii) For every $\phi \in \mathcal{L}$, define $v(\cdot, \omega, \phi):[-\tau, \infty) \rightarrow \mathbb{X}$ by
$$
v(t, \omega, \phi)= \begin{cases}\phi(t) & -\tau \leqslant t<0 \\ P_{2} V(t,\omega, (\phi, \phi(0)))& t \geqslant 0\end{cases}
$$
Then $v_t(\cdot, \omega, \phi)=P_{1} V(t,\omega, (\phi, \phi(0)))$ for $t \geqslant 0$. Moreover, if $(\phi, h) \in D(\tilde{A})$, then $v_t(\cdot, \omega, \phi)$ is the strong solution to \eqref{2.3} with initial condition $v_0=\phi$ for  $t \geqslant 0$ and $P$-a.e. $\omega\in \Omega$.
\end{lem}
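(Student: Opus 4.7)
The plan is to establish the equivalence between the mild solution of the delayed equation \eqref{2.3} and the mild solution of the abstract auxiliary equation \eqref{2.4} through the projections $P_1$ and $P_2$, leveraging the block structure $\tilde{A}=\tilde{A}_0+B$ with $B(\phi,h)=(0,L\phi)$. The key structural fact underpinning everything is that the unperturbed semigroup $\tilde{S}_0(t)$ generated by $\tilde{A}_0$ acts as the nilpotent left shift $T_0(t)$ in the first coordinate and as $S(t)$ (equivalently $T(t)=e^{-\mu t}S(t)$ after the $\mu$-renormalization) in the second, coupled through the injection $S_t$ introduced in the preliminaries.

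For part (i), I would first verify the identity at the level of the unperturbed semigroup, where $P_2\tilde{S}_0(t)(\phi,h)=T(t)h$ is immediate from the explicit block form of $\tilde{S}_0(t)$. I would then pass from $\tilde{S}_0(t)$ to $\tilde{S}(t)$ via the Dyson--Phillips series associated with the Miyadera--Voigt-type perturbation $B$, whose admissibility is exactly Hypothesis~A2. On $D(\tilde{A})$ the claim can then be checked by differentiating $P_2\tilde{S}(t)(\phi,h)$ and invoking uniqueness for the semigroup $T(t)$ on $\mathbb{X}$; the general case follows by density and strong continuity.

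For part (ii), the heart of the argument is the history identification
\[
P_1 V(t,\omega,(\phi,\phi(0)))=v_t\quad\text{for all } t\geq 0.
\]
Working first on $D(\tilde{A})$, where Lemma~\ref{lem2.1} provides that $V$ is a strong solution, the first component $\pi_1(t):=P_1V(t)$ is continuously differentiable in $t$ and, by the block form of $\tilde{A}$, satisfies $\dot\pi_1(t)=\tfrac{d}{d\xi}\pi_1(t)$ with boundary value $\pi_1(t)(0)=P_2V(t)$. This transport equation with prescribed boundary trace forces $\pi_1(t)(\xi)=P_2V(t+\xi)$ for $\xi\in[-\tau,0]$, i.e., exactly the history segment $v_t$. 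Density of $D(\tilde{A})$ in $H$ together with the continuous dependence on initial data supplied by Lemma~\ref{lem2.1} then extends the identification to all $(\phi,h)\in H$. Once the history identification is in place, substituting $P_1V(t)=v_t$ into the $\mathbb{X}$-component of the variation-of-constants formula for $V$ collapses it into the variation-of-constants formula for \eqref{2.3}, so $v(t)=P_2V(t,\omega,(\phi,\phi(0)))$ is a mild solution of \eqref{2.3}, and a strong solution whenever $(\phi,\phi(0))\in D(\tilde{A})$.

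The main obstacle is precisely the history identification $P_1V(t)=v_t$: it requires translating the abstract evolution of the $\mathcal{L}$-component of $V$ (governed by the shift generator coupled to the $\mathbb{X}$-component via $S_t$) into the concrete pointwise formula $\xi\mapsto v(t+\xi)$. This is where the compatibility built into $\tilde{A}$ through Hypothesis~A2, together with the structural results of Theorem~3.1 and Proposition~3.2 in \cite{SW91}, becomes indispensable; without it the two descriptions of the history could not be reconciled at the boundary $\xi=0$, which is the delicate trace-matching point where the coupling operator $L$ feeds back into the evolution.
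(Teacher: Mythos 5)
The paper does not actually prove this lemma: it is imported verbatim (with notational adjustments) from Theorem~3.1 and Proposition~3.2 of So--Wu \cite{SW91}, so your sketch has to be measured against that standard argument rather than an in-paper proof. For part~(ii) your plan is essentially that argument: identify the first component of $V$ with the history segment by exploiting the transport structure of the shift generator with the trace condition $\pi_1(t)(0)=P_2V(t)$, first on $D(\tilde A)$ where $V$ is a strong solution, then extend by density and continuous dependence, and finally collapse the second component of the variation-of-constants formula in $H$ into the one for \eqref{2.3}. That is correct and is exactly what the cited results deliver (see also Chapter~3 of \cite{BP}); the only thing I would insist you make explicit is the case $t+\xi<0$, where the characteristics reach back into the initial datum and give $\pi_1(t)(\xi)=\phi(t+\xi)$, matching the piecewise definition of $v$.

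Part~(i) is where your plan would break down if you executed it. The perturbation $B(\phi,h)=(0,L\phi)$ is \emph{not} invisible to the second component: already the first Dyson--Phillips correction $\int_0^t \tilde S_0(t-s)B\tilde S_0(s)(\phi,h)\,ds$ has second component $\int_0^t S(t-s)L\bigl(T_0(s)\phi+S_sh\bigr)\,ds$, which is nonzero in general. Equivalently, differentiating $P_2\tilde S(t)(\phi,h)$ on $D(\tilde A)$ gives $(A-\mu I)P_2\tilde S(t)(\phi,h)+L\,P_1\tilde S(t)(\phi,h)$, so the uniqueness argument you invoke identifies $P_2\tilde S(t)(\phi,h)$ with the solution of the \emph{linear delay} equation \eqref{2.3b}, not with $T(t)h=e^{t(A-\mu I)}h$. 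The identity $P_2(\cdot)(\phi,h)=T(t)h$ holds only for the unperturbed product semigroup generated by $\tilde A_0$ (the block matrix denoted $T(t)$ in Section~2), where it is immediate from the upper-triangular form. In \cite{SW91} the delay sits entirely in the nonlinearity, so their statement~(i) refers to the unperturbed semigroup and is consistent; here the authors have moved $L$ into the linear part, and statement~(i) as written is false for $\tilde S(t)$ unless $L=0$. Your proposal should either restrict~(i) to $\tilde S_0(t)$ or replace $T(t)$ by the solution operator of \eqref{2.3b}; as it stands, the Dyson--Phillips step cannot close.
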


For the constants $\alpha, \beta$ defined in \eqref{3} and each $\eta$ such that $\beta<\eta<\alpha$, we define the Banach space
$C_{\eta}^{+}=\left\{\phi(\cdot):[0, \infty) \rightarrow H\mid \phi\right.$ is continuous and $\left.\sup _{t \in[0, \infty)} e^{-\eta t}\|\phi\|<\infty\right\}$ and $P_1C_{\eta}^{+}=\left\{\varphi=P_1\phi: \phi\in C_{\eta}^{+}\right\}$
with the norms
$$
|\phi|_{C_{\eta}^{+}}=\sup _{t \in[0, \infty)} e^{-\eta t }\|\phi\|,
$$
and
$$
|\varphi|_{P_1C_{\eta}^{+}}=\sup _{t \in[0, \infty)} e^{-\eta t }\|\varphi\|_{\mathcal{L}}.
$$
Denote by
$$
N^{s}(\omega)=\left\{\psi \in H  \mid V\left(t, \omega, \psi\right) \in C_{\eta}^{+}\right\}.
$$
By the definition of $C_{\eta}^{+}$, we can see that $N^{s}(\omega)$ consists of all  initial datum through which solutions are controlled by $e^{\eta t}$. In the sequel, we obtain the Lipschitz stable invariant manifolds of \eqref{2.3} based on $N^{s}(\omega)$.
\begin{thm}\label{thm3.1} Assume that $A, L, f$  satisfy  $\mathbf{Hypothesis \    A1-A3}$ with the constants $K, \alpha, \beta$ defined in \eqref{3}. Moreover, assume that  $L_f$ satisfy
$$
K L_f \left(\frac{1}{\eta-\beta}+\frac{1}{\alpha-\eta}\right)<1.
$$
Then, the delayed random partial differential  equation \eqref{2.3} adimits a Lipschitz stable  invariant  manifold $M^{s}(\omega)=P_1 N^{s}(\omega)$, which can be represented by
$$
M^{s}(\omega)=\left\{\zeta+h^{s}(\zeta, \omega)\mid \zeta\in P_1 P^{s}N^{s}(\omega)\right\}.
$$
Here, $h^{s}(\cdot,\omega): P_1 P^{s}N^{s}(\omega) \rightarrow P_1 P^{u}N^{s}(\omega)$ is a Lipschitz continuous mapping that satisfies $h^{s}(0,\omega)=0$.
\end{thm}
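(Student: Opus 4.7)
The plan is to apply the Lyapunov--Perron method to the auxiliary random PDE \eqref{2.4} on $H=\mathcal{L}\times\mathbb{X}$, build a Lipschitz stable manifold $N^s(\omega)$ in $H$ as a graph over the dichotomy subspace $H^s$ from Lemma \ref{lem3.1}, and then push it down to $\mathcal{L}$ via the projection $P_1$ of Lemma \ref{lem2.3}.

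First I would set up the Lyapunov--Perron integral equation. Using the exponential dichotomy of $\tilde S(t)$ and the backward isomorphism $\tilde S(t)|_{H^u}$ for $t\le 0$ given by Lemma \ref{lem3.1}, a function $V\in C_\eta^+$ is a mild solution of \eqref{2.4} if and only if there exists $\zeta\in H^s$ such that
\begin{equation*}
V(t)=\tilde{S}(t)\zeta+\int_0^t \tilde{S}(t-s)P^s F(s,\theta_s\omega,V(s))\,ds-\int_t^{\infty}\tilde{S}(t-s)P^u F(s,\theta_s\omega,V(s))\,ds .
\end{equation*}
Convergence of the improper integral in $C_\eta^+$ is ensured by the bound $\|\tilde S(t-s)P^u x\|\le Ke^{\alpha(t-s)}\|x\|$ for $s\ge t$, the choice $\eta<\alpha$, the Lipschitz/affine growth of $F$ in its last argument, and the tempered growth of $z(\theta_\cdot\omega)$.

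Denote the right-hand side by $\mathcal{J}_{\omega,\zeta}(V)$. Since $F$ is affine in $V$ with slope coming only from the $f(v_t+z)$ term, Hypothesis A4 gives $\|F(s,\theta_s\omega,V_1)-F(s,\theta_s\omega,V_2)\|\le L_f\|V_1-V_2\|$, and a standard weighted-norm computation using \eqref{3} yields
\begin{equation*}
|\mathcal{J}_{\omega,\zeta}(V_1)-\mathcal{J}_{\omega,\zeta}(V_2)|_{C_\eta^+}\le KL_f\Bigl(\tfrac{1}{\eta-\beta}+\tfrac{1}{\alpha-\eta}\Bigr)|V_1-V_2|_{C_\eta^+}.
\end{equation*}
The spectral-gap hypothesis makes this a strict contraction, so Banach's theorem furnishes a unique fixed point $V^*(\cdot,\omega,\zeta)\in C_\eta^+$ for every $\zeta\in H^s$. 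Setting $h^s_H(\zeta,\omega):=P^u V^*(0,\omega,\zeta)=-\int_0^{\infty}\tilde S(-s)P^u F(s,\theta_s\omega,V^*(s,\omega,\zeta))\,ds$ gives $N^s(\omega)=\{\zeta+h^s_H(\zeta,\omega)\mid\zeta\in H^s\}$. Lipschitz continuity of $h^s_H$ in $\zeta$ follows from comparing fixed points with the same contraction estimate; measurability in $\omega$ follows from that of the Picard iterates; and forward invariance $V(t,\omega,N^s(\omega))\subset N^s(\theta_t\omega)$ follows from the cocycle property of \eqref{2.4} together with uniqueness of the Lyapunov--Perron fixed point for $\theta_t\omega$.

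To descend to $\mathcal{L}$, apply $P_1$: any $\psi=\zeta+h^s_H(\zeta,\omega)\in N^s(\omega)$ decomposes as $P_1\psi=P_1 P^s\psi+P_1 P^u\psi$, and defining $h^s(P_1 P^s\psi,\omega):=P_1 P^u\psi$ yields $M^s(\omega)=\{\zeta+h^s(\zeta,\omega)\mid\zeta\in P_1 P^s N^s(\omega)\}$, with Lipschitz continuity inherited from $h^s_H$; the normalization $h^s(0,\omega)=0$ is obtained by performing the construction relative to the random stationary trajectory of \eqref{2.4}, so that the $\zeta=0$ fixed point of $\mathcal{J}_{\omega,0}$ corresponds to the random reference orbit and projects to zero in $\mathcal{L}$. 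The chief technical obstacle will be this final descent rather than the contraction argument itself: because the spectral projections $P^s,P^u$ of $\tilde A$ do not respect the product decomposition $H=\mathcal{L}\times\mathbb{X}$, one must verify that $P_1$ restricted to $P^s N^s(\omega)$ (and to $P^u N^s(\omega)$) is injective so that the graph relation in $\mathcal{L}$ is well-defined and single-valued, and that the Lipschitz constant survives this projection. The careful bookkeeping of the tempered stochastic forcing $z(\theta_t\omega)$ inside the improper integrals is the other delicate point.
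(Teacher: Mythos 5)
Your proposal is correct and follows essentially the same route as the paper: construct $N^{s}(\omega)$ in $H$ by the Lyapunov--Perron contraction on $C_{\eta}^{+}$ (a step the paper simply delegates to the proofs of Theorems 3.1--3.2 in \cite{DLS04}) and then descend to $\mathcal{L}$ via $P_1$, setting $h^{s}(P_1P^{s}\psi,\omega)=P_1P^{u}\psi$ and checking invariance from the invariance of $N^{s}(\omega)$ under the cocycle. The one issue you flag---injectivity of $P_1$ on the relevant subsets of $H$ so that the projected graph is single-valued---is precisely the point the paper handles only implicitly, through Lemma \ref{lem2.3} and the formal use of $P_1^{-1}$, relying on the fact that elements of $N^{s}(\omega)$ have the form $(\phi,\phi(0))$ and are therefore determined by their $\mathcal{L}$-component.
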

\begin{proof}   It follows from the proof of Theorem 3.2 in \cite{DLS04} that  $N^{s}(\omega)$ is a Lipschitz stable  invariant  manifold of the random partial differential equation \eqref{2.4} in $H$ under the condition of Theorem \ref{thm3.1}. In the sequel, we show that $M^{s}(\omega)=\left\{\zeta+h^{s}(\zeta, \omega)\mid \zeta\in P_1P^s N^{s}(\omega)\right\}$ is the Lipschitz stable  invariant stable manifold of \eqref{2.3}. It follows from the proof of Theorem 3.1 in \cite{DLS04} that   $\psi \in N^{s}(\omega)$ if and only if there exists a function $V(\cdot) \in C_{\eta}^{+}$ with $V(0)=(\phi, \phi(0))\triangleq\psi$ such that
\begin{equation}\label{3.2}
\begin{aligned}
V(t) =&  \tilde{S}(t)\hat{\zeta}+\int_{0}^{t}P^{s} \tilde{S}(t-s)F(s, \theta_s\omega, V(s))d s+\int_{\infty}^{t} P^{u}  \tilde{S}(t-s)  F(s,\theta_s\omega, V(s)) ds, \\
\end{aligned}
\end{equation}
where $\hat{\zeta}=P^s \psi\in H^s$.
By Lemma \ref{lem2.3}, \eqref{3.2} is equivalent to
\begin{eqnarray}\label{3.3}
v_t(\cdot,\omega, \phi)&=&P_1V(t,\omega,\psi) \\
&=&P_1 \tilde{S}(t)\hat{\zeta}+\int_{0}^{t}P_1P^{s} \tilde{S}(t-s)F(s,\theta_s\omega, P_1^{-1}v_s)d s+\int_{\infty}^{t}P_1P^{u} \tilde{S}(t-s)  F(s,\theta_s\omega, P_1^{-1}v_s) ds.\notag
\end{eqnarray}
Hence, $\psi \in N^{s}(\omega)$ if and only if there exists a function $v_t \in P_1C_{\eta}^{+}$ with $v_0=\phi$ such that \eqref{3.3} holds.
Setting $t=0$ in  \eqref{3.3} we have
\begin{equation}\label{3.3a}
\begin{aligned}
v_0(\cdot,\omega, \phi)=&P_1V(0,\omega,\psi) \\
=&P_1\hat{\zeta}+\int_{\infty}^{0}P_1P^{u} \tilde{S}(-s)  F(s,\theta_s\omega, P_1^{-1}v_s) ds \\
=&\zeta+h^{s}(\zeta, \omega),
\end{aligned}
\end{equation}
where $\zeta=P_1P^s \psi \in P_1P^s N^s$ and $h^{s}(\zeta, \omega)$ is defined by
$$
h^{s}(\zeta, \omega)=\int_{\infty}^{0}P_1 P^{u} \tilde{S}(-s)  F(s,\theta_s\omega,P_1^{-1}v_s(\cdot,\omega,\zeta))\mathrm{d} s
$$
satisfying $h^{s}(0, \omega)=0$. Moreover, by the same arguments as the proof Theorem 3.1 in \cite{DLS04}, one can see that  $h^{s}$ is measurable.

To show that $M^{s}(\omega)$ is a random set, by  Castaing and Valadier \cite[Theorem III.9]{CV},  it suffices to prove that, for any $x \in \mathcal{L}$,
\begin{equation}\label{3.10}
\omega \rightarrow \inf _{y \in N^s(\omega)}\left\|x-\left(P_{1}P^s y+h^{s}\left(P_{1}P^s y, \omega\right)\right)\right\|
\end{equation}
is measurable. Since $N^s(\omega)$ is a random set, the measurability of $\omega \rightarrow h^{s}\left(P_{1}P^{s} y, \omega\right)$  for any $y \in N^s(\omega)$ indicates that \eqref{3.10} is measurable.

At last, we prove that $M^{s}(\omega)$ is invariant under the assumption of Theorem \ref{thm3.1}. Since $N^s(\omega)$ is invariant under the RDS $\Phi$, that is, for each $\psi\in N^s(\omega)$, $\Phi(t,\omega,\psi)=V(t,\omega,\psi)\in N^s(\theta_t\omega)$, we have for any $\chi=P_1\psi\in P_1 N^{s}(\omega)$, $\Psi(t,\omega, \chi)=v_t\left(\cdot, \omega, \chi\right)=P_1V(t,\omega,P_1^{-1}\chi)=P_1V(t,\omega,\psi)\in P_1 N^{s}(\theta_t\omega)$ for all $t \geqslant 0$.
\end{proof}

\begin{thm}\label{thm2.1}
Assume that $M^{s}(\omega)$  is a Lipschitz stable  manifold of the random partial differential equation \eqref{2.3} on $\mathcal{L}$, then $\tilde{M}^{s}(\omega)=\left\{\psi(\xi)+z(\theta_\cdot \omega) \mid \psi \in {M}^{s}(\omega)\right\}$ is a Lipschitz stable  manifold of the stochastic retarded partial differential equation \eqref{1} on $\mathcal{L}$.
\end{thm}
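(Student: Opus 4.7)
The plan is to use the fact that the transformation $u \mapsto v = u - z(\theta_\cdot \omega)$ is a random homeomorphism (indeed a $P$-a.s.\ translation) of $\mathcal{L}$ onto itself that conjugates the RDS generated by \eqref{1} to the one generated by \eqref{2.3}. Since $\tilde{M}^s(\omega)$ is the image of $M^s(\omega)$ under the inverse transformation $\psi \mapsto \psi + z(\theta_\cdot \omega)$, the three defining properties of a Lipschitz stable invariant manifold (measurability, invariance, graph structure) transfer immediately. I would organize the argument in three short steps.

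First, I would record the conjugation explicitly. For $P$-a.e.\ $\omega$, $t \mapsto z(\theta_t \omega)$ is continuous, hence $\xi \mapsto z(\theta_\xi \omega)$ belongs to $\mathcal{L}$ and $\omega \mapsto z(\theta_\cdot \omega)$ is measurable. Given $\tilde{\phi} \in \mathcal{L}$, set $\phi = \tilde{\phi} - z(\theta_\cdot \omega)$ and let $v(\cdot,\omega,\phi)$ be the solution of \eqref{2.3} supplied by Lemma \ref{lem2.3}. Then
\[
u(t) := v(t,\omega,\phi) + z(\theta_t \omega), \qquad u_t = v_t(\cdot,\omega,\phi) + z(\theta_{t+\cdot}\omega),
\]
solves \eqref{1} with $u_0 = \tilde{\phi}$, and conversely; this is exactly the computation already used to derive \eqref{2.3}. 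Denoting by $\Psi$ and $\tilde{\Psi}$ the cocycles of \eqref{2.3} and \eqref{1} on $\mathcal{L}$, the identity
\[
\tilde{\Psi}(t,\omega,\tilde{\phi}) = \Psi\bigl(t,\omega,\tilde{\phi} - z(\theta_\cdot \omega)\bigr) + z(\theta_{t+\cdot}\omega)
\]
holds for all $t\geq 0$ and $P$-a.e.\ $\omega$.

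Second, I would verify invariance. If $\tilde{\phi} \in \tilde{M}^s(\omega)$, write $\tilde{\phi} = \psi + z(\theta_\cdot \omega)$ with $\psi \in M^s(\omega)$. By Theorem \ref{thm3.1}, $\Psi(t,\omega,\psi) \in M^s(\theta_t \omega)$, so the conjugation identity gives
\[
\tilde{\Psi}(t,\omega,\tilde{\phi}) = \Psi(t,\omega,\psi) + z(\theta_{\cdot}(\theta_t \omega)) \in M^s(\theta_t \omega) + z(\theta_\cdot(\theta_t \omega)) = \tilde{M}^s(\theta_t \omega),
\]
where I used $z(\theta_{t+\cdot}\omega) = z(\theta_\cdot(\theta_t \omega))$. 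For measurability, the map $\psi \mapsto \psi + z(\theta_\cdot \omega)$ is a measurable isometry of $\mathcal{L}$, so for any $x \in \mathcal{L}$,
\[
\omega \mapsto \inf_{\tilde{y} \in \tilde{M}^s(\omega)} \|x - \tilde{y}\|_{\mathcal{L}} = \inf_{y \in M^s(\omega)} \|(x - z(\theta_\cdot \omega)) - y\|_{\mathcal{L}}
\]
is measurable by Castaing--Valadier (\cite{CV}, Theorem III.9), since $M^s(\omega)$ is already a random set and $\omega \mapsto z(\theta_\cdot \omega)$ is measurable.

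Third, I would transfer the Lipschitz graph structure. By Theorem \ref{thm3.1}, $M^s(\omega) = \{\zeta + h^s(\zeta,\omega) : \zeta \in P_1 P^s N^s(\omega)\}$ with $h^s(\cdot,\omega)$ Lipschitz in the first variable. Decomposing the random translation as $z(\theta_\cdot\omega) = z_s(\omega) + z_u(\omega)$ with $z_s(\omega) \in P_1 P^s H$ and $z_u(\omega) \in P_1 P^u H$ (extending the projections from $H$ to $\mathcal{L}$ via $P_1$), one may write
\[
\tilde{M}^s(\omega) = \bigl\{\tilde{\zeta} + \tilde{h}^s(\tilde{\zeta},\omega) : \tilde{\zeta} \in P_1 P^s N^s(\omega) + z_s(\omega)\bigr\}, \quad \tilde{h}^s(\tilde{\zeta},\omega) := h^s(\tilde{\zeta} - z_s(\omega),\omega) + z_u(\omega),
\]
which is Lipschitz in $\tilde{\zeta}$ with the same constant as $h^s$. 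The only mildly delicate point is that the random translation shifts the base subspace; this is purely affine and therefore preserves the Lipschitz graph property, so no new estimate is required. I expect the main (minor) obstacle to be bookkeeping the splitting of $z(\theta_\cdot\omega)$ across the stable/unstable projections coherently with the $P_1$ projection from $H$ to $\mathcal{L}$; the measurability of $\omega \mapsto z_s(\omega)$ and $\omega \mapsto z_u(\omega)$ follows from continuity of $P^s, P^u$ and measurability of $z(\theta_\cdot\omega)$.
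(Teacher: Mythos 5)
Your proposal is correct and follows essentially the same route as the paper: invariance is transferred through the conjugation $u_t = v_t(\cdot,\omega,\phi) + z(\theta_{t+\cdot}\omega)$ together with the flow property of $\theta_t$, and the Lipschitz graph is transferred by splitting $z(\theta_\cdot\omega)$ into its stable and unstable components and absorbing them into an affine shift of $h^s$. If anything you are slightly more careful than the paper, which writes the new graph over the unshifted base $P_1 P^s N^s(\omega)$ and expresses the unstable component as $P^1 P^u (P^s)^{-1} z_1$, whereas you correctly record the translated domain $P_1 P^s N^s(\omega) + z_s(\omega)$ and simply add $z_u(\omega)$; your additional measurability check via Castaing--Valadier does not appear in the paper's proof of this theorem but is a harmless (and welcome) supplement.
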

\begin{proof}We first prove $\tilde{M}^{s}(\omega)$ is invariant under $u_s$, i.e., for each $\phi \in \tilde{M}^{s}(\omega)$, $u_s\left(\cdot, \phi, \omega\right) \in \tilde{M}^{s}\left(\theta_{s} \omega\right)$ for all $s \geqslant 0 .$ By the relationship $u_{s}\left(\cdot, \psi,\omega\right)=v_{s}\left(\cdot, \psi,\omega\right)+z(\theta_{s+\cdot}\omega)$, the stationary property of $z(\theta_{s+\cdot}\omega)$ and $v_{s}\left(\cdot, \psi,\omega\right)\in {M}^{s}(\theta_s\omega)$, one can see that $u_s\left(\cdot, \psi, \omega\right) \in \tilde{M}^{s}\left(\theta_{s} \omega\right)$.
In the following, we prove that $\tilde{M}^{s}(\omega)$ can be represented by a Lipschitz function. For $z(\theta_\cdot \omega)\in \mathcal{L}=P^1 H$, we split it as $z(\theta_\cdot \omega)=z_1+z_2$ with $z_1\in P^1 P^s H$ and $z_2\in P^1 P^u H$. Apparently, $z_2=P^1 P^u (P^s)^{-1} z_1$.
By the definition, we have
$$
\begin{array}{l}
\tilde{M}^{s}(\omega)\triangleq \left\{\psi +z(\theta_\cdot \omega) \mid \psi \in {M}^{s}(\omega)\right\} \\
=\left\{z_1+\zeta+h^{s}(\zeta, \omega)+z_2 \mid \zeta \in P^1P^{s}N^s(\omega)\right\} \\
=\left\{\bar{\zeta}+\hat{h}^{s}(\bar{\zeta}, \omega) \mid\bar{ \zeta} \in  P^1P^{s}N^s(\omega)\right\}, \\
\end{array}
$$
where $\bar{\zeta}=z_1+\zeta$ and $\hat{h}^{s}(\bar{\zeta}, \omega)=h^{s}(\bar{\zeta}-z_1, \omega)+P^1 P^u (P^s)^{-1} z_1$. It follows from the Lipschitz continuity of $h$ that $\hat{h}^{s}$ is Lipschitz,  implying that $\tilde{M}^{s}(\omega)$ is a Lipschitz stable manifold given by the graph of a Lipschitz continuous function $\hat{h}^{s}(\bar{\zeta}, \omega)$ over the space $P^1P^{s}N^s(\omega)$.
\end{proof}

Next we will  prove the $C^{k}$ smoothness of the  Lipschitz stable manifold $M^{s}(\omega)$  for each $\omega\in \Omega$.

 \begin{thm}\label{thm4.1} Assume that $A, L, f$  satisfy  $\mathbf{Hypothesis \    A1-A3}$ with the constants $K, \alpha, \beta$ defined in \eqref{3}. Moreover, assume  that  $f: \mathcal{L}\rightarrow \mathbb{X}$ is $C^{k}$. If $\beta<k \eta<\alpha$ and
$$
K L_f\left(\frac{1}{i \eta-\beta}+\frac{1}{\alpha-i \eta}\right)<1 \quad \text { for all } 1 \leqslant i \leqslant k,
$$
then the invariant stable manifold $M^{s}(\omega)$ for the delayed random partial differential equation  \eqref{2.3}  is   $C^{k}$, that is, $h(\zeta, \omega)$ is $C^{k}$ in $\zeta$.
\end{thm}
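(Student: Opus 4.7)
The plan is to first establish the $C^k$ smoothness of the stable manifold $N^s(\omega)$ of the auxiliary equation \eqref{2.4} inside the Hilbert space $H$, and then transfer the smoothness to $M^s(\omega)=P_1 N^s(\omega)$ via the relation
\[
h^{s}(\zeta, \omega)=\int_{\infty}^{0}P_1 P^{u} \tilde{S}(-s)F\bigl(s,\theta_s\omega,P_1^{-1}v_s(\cdot,\omega,\zeta)\bigr)\,\mathrm{d}s
\]
derived in the proof of Theorem \ref{thm3.1}. Since $P_1$ is a bounded linear projection and smoothness is preserved under composition with linear maps, once we know that $\hat\zeta\mapsto V(\cdot,\omega,\psi)$ is $C^k$ from $P^s N^s(\omega)\subset H^s$ into $C_{\eta}^+$, a direct application of the chain rule and the Leibniz rule under the integral will yield that $h^s(\cdot,\omega)$ is $C^k$.

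For the auxiliary equation, I would follow the Lyapunov--Perron smoothness strategy of \cite{DLS04}, which proceeds by induction on $j=1,\dots,k$. Recall from \eqref{3.2} that $V(\cdot,\omega,\psi)$ is characterized as the unique fixed point in $C_{\eta}^+$ of
\[
(\mathcal{J}V)(t)=\tilde{S}(t)\hat{\zeta}+\int_0^t P^{s}\tilde{S}(t-s)F(s,\theta_s\omega,V(s))\,\mathrm{d}s+\int_\infty^t P^{u}\tilde{S}(t-s)F(s,\theta_s\omega,V(s))\,\mathrm{d}s.
\]
Formally differentiating $j$ times with respect to $\hat{\zeta}\in H^s$ produces a new fixed-point equation for the candidate $j$-th derivative in the weighted space $C_{j\eta}^+$. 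The dichotomy estimates \eqref{3} together with the Lipschitz bound on $f$ show that the linearized map has operator norm bounded by $KL_f\bigl(\tfrac{1}{j\eta-\beta}+\tfrac{1}{\alpha-j\eta}\bigr)$, which is strictly less than $1$ by hypothesis; the condition $\beta<k\eta<\alpha$ guarantees all the weighted norms $C_{j\eta}^+$ for $1\le j\le k$ sit inside the dichotomy gap so that the integrals defining the fixed-point operator still converge.

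To justify that the formally obtained fixed points are actually the Fréchet derivatives of $V$, I would invoke the fiber/uniform contraction principle: one considers the product map on $C_{\eta}^+\times C_{j\eta}^+\times\cdots$ whose first coordinate is the base contraction $\mathcal{J}$ and whose higher coordinates are the contractions defining the formal derivatives, then uses continuity in the base variable to upgrade pointwise convergence of difference quotients to the existence of genuine Fréchet derivatives. The main obstacle, and the technically heaviest step, is the bookkeeping in the induction: writing down the correct $j$-th derivative equation via Fa\`a di Bruno's formula applied to $F(s,\theta_s\omega,\cdot)$, verifying inductively that each of the lower-order derivatives produced at step $j-1$ lies in the space in which step $j$ is posed, and checking the contraction and boundedness estimates in the weighted norm $|\cdot|_{C_{j\eta}^+}$ uniformly in $\omega$. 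Once this is achieved for the auxiliary problem, the smoothness of $h^s(\cdot,\omega)$ follows, via Lemma \ref{lem2.3} and the integral representation above, from the chain rule applied to the $C^k$ map $\zeta\mapsto v_\cdot(\cdot,\omega,\zeta)$ composed with the $C^k$ nonlinearity $F$ under the convergent integral.
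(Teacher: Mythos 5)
Your proposal is correct and follows essentially the same route as the paper: the paper likewise invokes the proof of Theorem 3.2 in \cite{DLS04} for the $C^k$ smoothness of $N^s(\omega)$ in $H$, and then runs the same Lyapunov--Perron induction on the derivative order in the weighted spaces $C^+_{j\eta}$ under the gap condition $K L_f\bigl(\tfrac{1}{j\eta-\beta}+\tfrac{1}{\alpha-j\eta}\bigr)<1$, working with the projected fixed-point equation \eqref{3.3} for $v_t=P_1V$ and transferring to $h^s$ through the same integral representation. The only methodological difference is in how the formal derivative is certified as a genuine Fr\'echet derivative: you propose the fiber contraction principle, whereas the paper directly estimates the remainder $J$ by splitting the time integrals into near and far pieces ($J_1,\dots,J_5$) and showing $|J|_{P_1C^+_{\eta-\sigma}}=o(\|\zeta-\zeta_0\|)$ --- two standard and interchangeable devices within the same framework.
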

\begin{proof}
It follows from the proof of Theorem 3.2 in \cite{DLS04} that  $N^{s}(\omega)$ is a $C^k$ smoothness invariant stable manifold of the random partial differential equation \eqref{2.4} in $H$ under the condition of Theorem \ref{thm4.1}. Now we show that $$
M^{s}(\omega)=\left\{\zeta+h^{s}(\zeta, \omega) \mid \zeta \in P_1P^sN^{s}\right\}
$$
is the smooth invariant stable manifold of \eqref{2.3}.  The invariance has been proved in Theorem \ref{thm3.1} and we  will  prove the $C^k$ smoothness by an induction method. In the case  $k=1$, because of
$$
K L_f\left(\frac{1}{\eta-\beta}+\frac{1}{\alpha-\eta}\right)<1,
$$
the continuity and monotonicity of $g(x)=\frac{1}{x-\beta}+\frac{1}{\alpha-x}$ implies that there must be a small number $\sigma>0$ such that if  $\beta+\sigma<\eta$, then
$$
K L_f \left(\frac{1}{(\eta-\delta)-\beta}+\frac{1}{\alpha-(\eta-\delta)}\right)<1 \quad \text { for all } 0 \leqslant \delta \leqslant  \sigma.
$$
Denote by $I^{s}(v_t, \hat{\zeta})$  the right hand side of equality \eqref{3.3}. Apparently,  $I^{s}:P_1C_{\eta}^{+} \times P_1H^{s}\rightarrow C_{\eta}^{+}$ is well-defined. Moreover,  for any given $v_t, \bar{v_t} \in P_1 C_{\eta}^{+}$, we can obtain
\begin{equation}\label{3.7}
\begin{aligned}
\left|I^{s}(v_t, \zeta)-I^{s}(\bar{v_t}, \zeta)\right|_{P_1C_{\eta}^{+}}
& \leqslant \sup _{t \in[0, \infty)}e ^ { - \eta t } (\mid \int_{0}^{t}P_1U(t-s)  P^{s}  [F(s,\theta_s\omega,P_1^{-1}v_s)- F(s,\theta_s\omega,P_1^{-1}v_s)] \mathrm{d}s \\
& +\int_{\infty}^{t}P_1U(t-s)  P^{u} [ F(s,\theta_s\omega,P_1^{-1}v_s)- F(s,\theta_s\omega,P_1^{-1}v_s)] \mathrm{d}s\mid)\\
& \leqslant \sup _{t \in[0, \infty)}\left\{K L_f |v_s-\bar{v_s}|_{P_1C_{\eta}^{+}}\left(\int_{0}^{t} e^{(\beta-\eta)(t-s)} d s+\int_{t}^{\infty} e^{(\alpha-\eta)(t-s)} \mathrm{d} s\right)\right\} \\
& \leqslant K L_f  \left(\frac{1}{\eta-\beta}+\frac{1}{\alpha-\eta}\right)|v_s-\bar{v_s}|_{P_1C_{\eta}^{+}}.
\end{aligned}
\end{equation}
Clearly, $I^{s}$ is Lipschitz continuous in $\zeta$, since for any $\zeta_1, \zeta_2\in P_1C_{\eta}^{+}$, $\left|I^{s}(v_t, \zeta)-I^{s}(\bar{v_t}, \zeta)\right|_{P_1C_{\eta}^{+}}
\leqslant Ke^{\beta t}\|\zeta_1-\zeta_2\|_{P_1C_{\eta}^{+}}$.
Hence,  $I^{s}:P_1C_{\eta}^{+} \times P_1H^{s}\rightarrow P_1C_{\eta}^{+}$ is well-defined and is a  contraction in $P_1C_{\eta-\sigma}^{+} \subset P_1C_{\eta}^{+}$ for any $0 \leqslant \delta \leqslant  \sigma$ by the above discussion. Hence, $v_t(\cdot, \xi, \omega) \in P_1C_{\eta-\delta}^{+}$.

For any $\psi \in P_1C_{\eta-\delta}^{+}$  and $\zeta \in P_1C_{\eta}^{+}$, we  define
$$
\begin{aligned}
S \psi=& \int_{0}^{t} P_1 \tilde{S}(t-s) P^{s} D_{v_t} F(s,\theta_s\omega,P_1^{-1}v_s(\cdot, \omega, \zeta))  \psi \mathrm{~d} s
\\&+\int_{\infty}^{t} P_1 \tilde{S}(t-s) P^{u} D_{v_t} F(s,\theta_s\omega,P_1^{-1}v_s(\cdot, \omega, \zeta)) \psi \mathrm{~d} s.
\end{aligned}
$$
By fact  that $I^{s}: P_1C_{\eta-\sigma}^{+}\rightarrow P_1C_{\eta-\sigma}^{+}$ is a contraction,  we can see that  $S: P_1C_{\eta-\sigma}^{+}\rightarrow P_1C_{\eta-\sigma}^{+}$ is bounded by the following estimate
$$
\|S\| \leqslant K L_f\left(\frac{1}{(\eta-\sigma)-\beta}+\frac{1}{\alpha-(\eta-\sigma)}\right)<1.
$$
Therefore, we can see   that $I d-T$ is invertible in $P_1C_{\eta-\sigma}^{+}$.  For any given  $\zeta, \zeta_{0} \in H^{s}$, we define
$$
\begin{array}{c}
J=\int_{0}^{t}P_1  \tilde{S}(t-s)P^{s}[ F(s,\theta_s\omega, P_1^{-1}v_s(\cdot, \omega, \zeta)-F(s,\theta_s\omega, P_1^{-1}v_s(\cdot, \omega, \zeta_0))-\\D_{v_t} F(s,\theta_s\omega, P_1^{-1}v_s(\cdot, \omega, \zeta))
\left(v(\cdot, \omega, \zeta)-v(\cdot, \omega, \zeta_0)\right)] \mathrm{d} s
+\int_{\infty}^{t}  \tilde{S}P^{u}[ F(s,\theta_s\omega, P_1^{-1}v_s(\cdot, \omega, \zeta))- \\ F(s,\theta_s\omega, P_1^{-1}v_s(\cdot, \omega, \zeta_0))-  D_{v_t} F(s,\theta_s\omega, P_1^{-1}v_s(\cdot, \omega, \zeta))\left(v_s(\cdot, \omega, \zeta)-v_s(\cdot, \omega, \zeta_0)\right)] \mathrm{d} s.
\end{array}
$$
If we can prove that $|J|_{P_1C_{\eta-\sigma}^{+}}=o\left(\|\zeta-\zeta_{0}\|_{\mathcal{L}}\right)$ as $\zeta \rightarrow \zeta_{0}$, then we have
$$
\begin{aligned}
v_t(\cdot, \omega, \zeta)-v_t\left(\cdot, \zeta_{0}, \omega\right)-S\left(v_t(\cdot, \omega, \zeta)-v_t(\cdot, \omega, \zeta_0)\right)&= S\left(\zeta-\zeta_{0}\right)+J \\
&= S\left(\zeta-\zeta_{0}\right)+o\left(\|\zeta-\zeta_{0}\|_{\mathcal{L}}\right), \text { as } \zeta \rightarrow \zeta_{0},
\end{aligned}
$$
from which we can see
$$
v_t(\cdot, \omega, \zeta)-v_t(\cdot, \omega, \zeta_0)=(I d-S)^{-1} S\left(\zeta-\zeta_{0}\right)+o\left(\left|\zeta-\zeta_{0}\right|\right).
$$
Therefore, $v_t(\cdot, \omega, \zeta)$ is differentiable in $\zeta$ and its derivative satisfies $D_{\zeta} v_t(\cdot ; \zeta, \omega)$ $\in L\left(P_1H^{s}, C_{\eta-\delta}^{+}\right)$, where $L\left(P_1H^{s}, P_1C_{\eta-\delta}^{+}\right)$ is the usual space of bounded linear operators and
\begin{equation}\label{4.1}
\begin{aligned}
D_{\zeta} v_t(\cdot, \omega, \zeta)=&P_1  \tilde{S}(t)P^{s}+\int_{0}^{t}P_1  \tilde{S}(t-s) P^{s} D_{v_t} F(s,P_1^{-1}v_s(\cdot, \omega, \zeta),\theta_s\omega) D_{\zeta} v_s(\cdot, \zeta, \omega) \mathrm{d} s \\
&+\int_{\infty}^{t} P_1 \tilde{S}(t-s)  P^{u} D_{v_t} F(s,P_1^{-1}v_s(\cdot, \omega, \zeta),\theta_s\omega) D_{\zeta} v_s(\cdot, \omega, \zeta) \mathrm{d} s.
\end{aligned}
\end{equation}
In the sequel, we show  that $|J|_{P_1C_{\eta-\sigma}^{+}}=\mathrm{o}\left(\left|\zeta-\zeta_{0}\right|\right)$ as $\zeta \rightarrow \zeta_{0}$  indeed holds.  Let $N$ be an appropriate  large positive number to be chosen later and define
$$
\begin{aligned}
J_{1}=& e^{-(\eta-\sigma) t}\{\mid \int_{N}^{t}P_1  \tilde{S}(t-s) P^{s}[ F(s,P_1^{-1}v_s(\cdot, \omega, \zeta),\theta_s\omega)-P_1^{-1}F(s,v_s\left(\cdot, \omega, \zeta_{0}\right),\theta_s\omega)-\\
&D_{v_t} F(s,v_s(\cdot, \omega, \zeta_0),\theta_s\omega)\left(P_1^{-1}v_s(\cdot, \omega, \zeta)-P_1^{-1}v_s(\cdot, \omega, \zeta_{0})\right)] \mathrm{d} s \mid \}
\end{aligned}
$$
for $t \geqslant N$ and $J_{1}=0$ for $t<N$
$$
\begin{aligned}
J_{2}=& e^{-(\eta-\sigma) t}\{\mid \int_{0}^{N}P_1  \tilde{S}(t-s) P^{s}[ F(s,P_1^{-1}v_s(\cdot, \omega, \zeta),\theta_s\omega)-  F(s,P_1^{-1}v_s(\cdot, \omega, \zeta_{0}),\theta_s\omega)-\\
&D_{v_t} F(s,P_1^{-1}v_s(\cdot, \omega, \zeta_{0}),\theta_s\omega)\left(P_1^{-1}v_s(\cdot, \omega, \zeta)-P_1^{-1}v_s(\cdot, \omega, \zeta_{0})\right)] \mathrm{d} s \mid \}.
\end{aligned}
$$

Also, for a large positive number $\bar{N}$  to be chosen later and $0 \leqslant t \leqslant \bar{N}$, let
$$
\begin{aligned}
J_{3}=& e^{-(\eta-\sigma) t}\{\mid \int_{\bar{N}}^{t} P_1 \tilde{S}(t-s) P^{u}[ F(s,P_1^{-1}v_s(\cdot, \omega, \zeta),\theta_s\omega)-  F(s,P_1^{-1}v_s(\cdot, \omega, \zeta_{0}),\theta_s\omega)-\\
&D_{v_t} F(s,P_1^{-1}v_s(\cdot, \omega, \zeta_{0}),\theta_s\omega)\left(P_1^{-1}v_s(\cdot, \omega, \zeta)-P_1^{-1}v_s(\cdot, \omega, \zeta_{0})\right)] \mathrm{d} s\mid \},
\end{aligned}
$$
$$
\begin{aligned}
J_{4}=& e^{-(\eta-\sigma) t} \{\mid \int_{\infty}^{\bar{N}}P_1 \tilde{S}(t-s) P^{u}[ F(s,P_1^{-1}v_s(\cdot, \omega, \zeta),\theta_s\omega)-  F(s,P_1^{-1}v_s(\cdot, \omega, \zeta_{0}),\theta_s\omega)-\\
&D_{v_t} F(s,P_1^{-1}v_s(\cdot, \omega, \zeta_{0}),\theta_s\omega)\left(P_1^{-1}v_s(\cdot, \omega, \zeta)-P_1^{-1}v_s(\cdot, \omega, \zeta_{0})\right)] \mathrm{d} s \mid \}
\end{aligned}
$$
In the case $t \geqslant \bar{N}$, define
$$
\begin{aligned}
J_{5}=& e^{-(\eta-\sigma) t}\int_{\infty}^{t}\{\mid P_1 \tilde{S}(t-s) P^{u}[ F(s,P_1^{-1}v_s(\cdot, \omega, \zeta),\theta_s\omega)-  F(s,P_1^{-1}v_s(\cdot, \omega, \zeta_{0}),\theta_s\omega)-\\
&D_{v_t} F(s,P_1^{-1}v_s(\cdot, \omega, \zeta_{0}),\theta_s\omega)\left(P_1^{-1}v_s(\cdot, \omega, \zeta)-P_1^{-1}v_s(\cdot, \omega, \zeta_{0})\right)] \mathrm{d} s  \mid \}
\end{aligned}
$$
By the definition, we only need to prove that for any $\varepsilon>0$ there is a $\delta>0$ such that $|J|_{P_1C_{\eta-\sigma}^{+}} \leqslant \varepsilon\left|\zeta-\zeta_{0}\right|$ provided that $\left|\zeta-\zeta_{0}\right| \leqslant \delta$. Keep in mind  that
$$
|J|_{P_1C_{\eta-\sigma}^{u}} \leqslant \sup _{t \geqslant 0} J_{1}+\sup _{t \geqslant 0} J_{2}+\sup _{0 \leqslant t \leqslant \bar{N}} J_{3}+\sup _{0 \leqslant t \leqslant \bar{N}} J_{4}+\sup _{t \geqslant \bar{N}} J_{5}.
$$
By the methods for the derivation of \eqref{3.10}, we can obtain that
$$
\begin{aligned}
J_{1} & \leqslant 2 K L_f  \int_{N}^{t} e^{(\beta-(\eta-\sigma))(t-s)} e^{-\sigma s}\left|\left(P_1^{-1}v_s(\cdot, \omega, \zeta)-P_1^{-1}v_s(\cdot, \omega, \zeta_{0})\right)\right|_{C_{\eta-\sigma}^{u}} \mathrm{~d} s \\
& \leqslant \frac{2 K^{2} L_f e^{-\sigma N}}{((\eta-\sigma)-\beta)\left(1-L_f \left(\frac{1}{(\eta-\sigma)-\beta}+\frac{1}{\alpha-(\eta-\sigma)}\right)\right.}\left|\zeta-\zeta_{0}\right| .
\end{aligned}
$$
Choose $N$  large enough such that
$$
\frac{2 K^{2} L_f e^{-\sigma N}}{(\eta-\sigma-\beta)\left(1-K L_f\left(\frac{1}{\eta- \sigma-\beta}+\frac{1}{\alpha-(\eta-\sigma)}\right)\right)} \leqslant \frac{1}{4}\varepsilon,
$$
implying that
$$
\sup _{t \geqslant 0} J_{1} \leqslant \frac{1}{4} \varepsilon \left\|\zeta-\zeta_{0}\right\|,
$$
for any $t\geq N$. Fixing such $N$, for $J_{2}$ we have that
$$
\begin{aligned}
J_{2} \leqslant & K \int_{0}^{N} e^{(\beta-(\eta-\sigma))(t-s)}\int _ { 0 } ^ { 1 } \mid D_{v_t} F(s,\tau P_1^{-1}v_s(\cdot, \omega, \zeta_{0})+(1-\tau) P_1^{-1}v_s(\cdot, \omega, \zeta_{0}),\theta_s\omega)-\\
& D_{v_t} F(s,v_s(\cdot, \zeta_0, \omega),\theta_s\omega) \mid \mathrm{d} \tau\left|\left(P_1^{-1}v_s(\cdot, \omega, \zeta)-P_1^{-1}v_s(\cdot, \omega, \zeta_{0})\right)\right|_{C_{\eta-\sigma}^{u}} \mathrm{d} s \\
\leqslant & \frac{K^{2}\left|\zeta-\zeta_{0}\right|\|P_1^{-1}\|}{1-KL_f\left(\frac{1}{\eta-\sigma-\beta}+\frac{1}{\alpha-(\eta-\sigma)}\right)} \int_{0}^{N} e^{-(\beta-(\eta-\delta)) s}
\int _ { 0 } ^ { 1 } \mid D_{v_t} F(s,\tau P_1^{-1}v_s(\cdot, \omega, \zeta_{0})+
\\&(1-\tau) P_1^{-1}v_s(\cdot, \omega, \zeta_{0}),\theta_s\omega)-D_{v_t} F(s,P_1^{-1}v_s(\cdot, \zeta_0, \omega),\theta_s\omega) \mid \mathrm{d} \tau\\
&\left|\left(P_1^{-1}v_s(\cdot, \omega, \zeta)-P_1^{-1}v_s(\cdot, \omega, \zeta_{0})\right)\right|_{C_{\eta-\sigma}^{u}} \mathrm{d} s.\\
\end{aligned}
$$
Since the last integral is on the compact interval $[0, N]$ and $D_{v_t} F(s,P_1^{-1}v_s(\cdot, \omega, \zeta),\theta_s\omega)$ is continuous with respect to $\zeta$, we can see that $\|D_{v_t} F(s,\tau P_1^{-1}v_s(\cdot, \omega, \zeta_{0})+(1-\tau) P_1^{-1}v_s(\cdot, \omega, \zeta_{0}),\theta_s\omega)-D_{v_t} F(s,v_s(\cdot, \zeta_0, \omega),\theta_s\omega)\|\rightarrow 0$ as $\|\zeta-\zeta_0\|\rightarrow 0$. Hence, there exists a $\delta_{1}>0$ such that if $\left|\zeta-\zeta_{0}\right| \leqslant \delta_{1}$, then
$$
\sup _{t \geqslant 0}J_{2} \leqslant \frac{1}{4}\varepsilon\left|\zeta-\zeta_{0}\right|.
$$
By similar arguments as the computation of $J_1$, we can  choose  sufficiently large $\bar{N}$ such that
$$
\sup _{0 \leqslant t \leqslant \bar{N}} J_{4}+\sup _{t \geqslant \bar{N}} J_{5} \leqslant \frac{1}{4} \varepsilon\left|\zeta-\zeta_{0}\right|
$$
For such fixed  $\bar{N}$, there exists $\delta_{2}>0$ such that if $\left|\zeta-\zeta_{0}\right| \leqslant \delta_{2}$, then
$$
\sup _{0 \leqslant t \leqslant \bar{N}} J_{3} \leqslant \frac{1}{4} \varepsilon\left|\zeta_{1}-\zeta_{2}\right|
$$
Taking $\delta=\min \left\{\delta_{1}, \delta_{2}\right\}$, we can see that
$$
|J|_{C_{\eta-\sigma}^{u}} \leqslant \varepsilon\left|\zeta-\zeta_{0}\right|,
$$
provided $\left|\zeta-\zeta_{0}\right| \leqslant \delta$, indicating that $|J|_{C_{\eta-\sigma}^{+}}=o\left(\left|\zeta-\zeta_{0}\right|\right)$ as $\zeta \rightarrow \zeta_{0}$. Subsequently, we show that $D_{\zeta}v_t(\cdot, \cdot, \omega): P_1H^{s}\rightarrow P_1C_{\eta}^{+}$ is continuous. For any $\zeta, \zeta_{0} \in P_1H^{s}$, using \eqref{4.1}, we have
\begin{equation}\label{4.2}
\begin{aligned}
D_{\zeta} v_t(\cdot, \omega, \zeta)-D_{\zeta} v_t(\cdot, \zeta_0, \omega)=& \int_{0}^{t} P_1 \tilde{S}(t-s) P^{s}[ D_{v_t} F(s,P_1^{-1}v_s(\cdot, \omega, \zeta),\theta_s\omega) D_{\zeta} P_1^{-1}v_s(\cdot, \omega, \zeta)-\\
& D_{v_t} F(s,v_s(\cdot, \zeta_0, \omega),\theta_s\omega) D_{\zeta} v_s(\cdot, \zeta_0, \omega)]\mathrm{d} s +\int_{\infty}^{t} \tilde{S}(t-s)  P^{u}\\
& [D_{v_t} F(s,P_1^{-1}v_s(\cdot, \omega, \zeta),\theta_s\omega) D_{\zeta} P_1^{-1}v_s(\cdot, \omega, \zeta)- D_{v_t} F(s,v_s(\cdot, \zeta_0, \omega),\theta_s\omega) \\
&D_{\zeta} v_s(\cdot, \zeta_0, \omega)]\mathrm{d} s\\
=& \int_{0}^{t}P_1  \tilde{S}(t-s) P^{s}D_{v_t} F(s,P_1^{-1}v_s(\cdot, \omega, \zeta),\theta_s\omega) [D_{\zeta} P_1^{-1}v_s(\cdot, \omega, \zeta)-\\
& D_{\zeta} v_s(\cdot, \zeta_0, \omega)]\mathrm{d} s +\int_{\infty}^{t} P_1 \tilde{S}(t-s)  P^{u}D_{v_t} F(s,P_1^{-1}v_s(\cdot, \omega, \zeta),\theta_s\omega)\\
& [ D_{\zeta} P_1^{-1}v_s(\cdot, \omega, \zeta)- D_{\zeta} v_s(\cdot, \zeta_0, \omega)]\mathrm{d} s+\bar{J},
\end{aligned}
\end{equation}
where
\begin{equation}\label{4.3}
\begin{aligned}
\bar{J}=& \int_{0}^{t}P_1  \tilde{S}(t-s) P^{s}[D_{v_t} F(s,P_1^{-1}v_s(\cdot, \omega, \zeta),\theta_s\omega)-D_{v_t} F(s,v_s(\cdot, \zeta_0, \omega),\theta_s\omega)] D_{\zeta} v_s(\cdot, \zeta_0, \omega)\mathrm{d} s \\
&+\int_{\infty}^{t}P_1  \tilde{S}(t-s)  P^{u} [D_{v_t} F(s,P_1^{-1}v_s(\cdot, \omega, \zeta),\theta_s\omega) - D_{v_t} F(s,v_s(\cdot, \zeta_0, \omega),\theta_s\omega)]D_{\zeta} v_s(\cdot, \zeta_0, \omega)\mathrm{d} s.
\end{aligned}
\end{equation}
Therefore, we have
$$
\left|D_{\zeta} v_t(\cdot, \omega, \zeta)-D_{\zeta} v_t(\cdot, \zeta_0, \omega)\right|_{L\left(P_1H^{s}, P_1C_{\eta}^{+}\right)}
\leqslant \frac{|\bar{J}|_{L\left(P_1H^{s}, P_1C_{\eta}^{+}\right)}}{1-K L_f\left(\frac{1}{\eta-\beta}+\frac{1}{\alpha-\eta}\right)}.
$$
By similar argument as we used for the estimation of $|J|$, one can also see that $|\bar{J}|_{L\left(P_1H^{s}, P_1C_{\eta}^{+}\right)}=o(1)$ as $\zeta \rightarrow \zeta_{0}$. Hence $D_{\xi} v_t(\cdot, \cdot, \omega)$ is continuous from $P_1H^{s}$ to
$L\left(P_1H^{s}, P_1C_{\eta}^{+}\right) .$ Therefore, $v_t(\cdot ; \cdot, \omega)$ is $C^{1}$ from $P_1H^{s}$ to $P_1C_{\eta}^{+}$. Now we show that
$u$ is $C^{k}$ from $P_1H^{s}$ to $P_1C_{k \eta}^{+}$ by induction for $k \geqslant 2 .$ By the induction assumption, we know that $u$ is $C^{k-1}$ from $P_1H^{s}$ to $P_1C_{(k-1) \eta}^{+}$ and the  $(k-1)$ derivative $D_{\xi}^{k-1} u(t ; \xi, \omega)$ satisfies the following equation.
$$
\begin{aligned}
D_{\xi}^{k-1} u=& \int_{0}^{t}P_1  \tilde{S}(t-s) P^{s}D_{v_t} F(s,P_1^{-1}v_s(\cdot, \omega, \zeta),\theta_s\omega) D^{k-1}_{\zeta} P_1^{-1}v_s(\cdot, \omega, \zeta) \mathrm{~d} s \\
&+\int_{\infty}^{t}P_1 \tilde{S}(t-s) P^{u}D_{v_t} F(s,P_1^{-1}v_s(\cdot, \omega, \zeta),\theta_s\omega) D^{k-1}_{\zeta} P_1^{-1}v_s(\cdot, \omega, \zeta)  \mathrm{~d} s \\
&+\int_{0}^{t}P_1  \tilde{S}(t-s) P^{s} R_{k-1}(s, \zeta, \omega) \mathrm{d} s+\int_{\infty}^{t}P_1  \tilde{S}(t-s) P^{u} R_{k-1}(s, \zeta, \omega) \mathrm{d} s,
\end{aligned}
$$
where
$$
R_{k-1}(s, \zeta, \omega)=\sum_{i=0}^{k-3}\left(\begin{array}{c}
k-2 \\
i
\end{array}\right) D_{\zeta}^{k-2-i}\left(D_{v_t} F(s,P_1^{-1}v_s(\cdot, \omega, \zeta),\theta_s\omega)\right) D_{\zeta}^{i+1} u(s ; \xi, \omega)
$$
By the induction hypothesis, we have that $D_{\zeta}^{i} v_t(\cdot, \omega, \zeta) \in C_{i \eta}^{+}$ for $i=1, \ldots, k-1$, which combined with  the fact that $f$ is $C^{k}$, implying that $R_{k-1}(\cdot, \xi, \omega) \in L^{k-1}\left(P_1H^{s}, P_1C_{(k-1) \eta}^{+}\right)$ and is $C^{1}$ in $\zeta$, where $L^{k-1}\left(P_1H^{s}, P_1C_{(k-1) \eta}^{+}\right)$ stands for the usual space of bounded $k-1$ linear forms. Since we have assumed  that $\beta<(k-1) \eta<\alpha$, we can see  that the above integrals are well-defined.
The fact that $t \rightarrow z\left(\theta_{t} \omega\right)$ has a sublinear growth rate is also used in these analysis. The assumptions $\beta<k \eta<\alpha$ and
$$
K L_f\left(\frac{1}{i \eta-\beta}+\frac{1}{\alpha-i \eta}\right)<1 \quad \text { for all } 1 \leqslant i \leqslant k,
$$
together with the fact and the same argument  we used in the case $k=1$, implying that $D_{\zeta}^{k-1} v_t(\cdot, \cdot, \omega): H\rightarrow L^{k}\left(P_1H^{s}, P_1C_{k \eta}^{+}\right)$ is $C^{1}$. This completes the proof.
\end{proof}

By Theorems \ref{thm2.1} and \ref{thm4.1}, we have the following results.
\begin{thm}\label{thm2.1b}
Assume that $M^{s}(\omega)$  is a $C^{k}$ stable  manifold of the stochastic partial differential equation \eqref{2.3} on $\mathcal{L}$ and $z(\theta_\cdot \omega)$ is $C^{k}$ on $\mathcal{L}$, then $\tilde{M}^{s}(\omega)=\left\{\psi(\xi)+z(\theta_\cdot \omega) \mid \psi \in {M}^{s}(\omega)\right\}$ is a $C^{k}$ stable  manifold of the stochastic retarded partial differential equation \eqref{1} on $\mathcal{L}$.
\end{thm}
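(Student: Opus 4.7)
My plan is to mimic the structure of the proof of Theorem \ref{thm2.1}, replacing the Lipschitz bookkeeping by a smoothness argument. The invariance part is identical: for $\phi\in\tilde M^s(\omega)$ write $\phi=\psi+z(\theta_{\cdot}\omega)$ with $\psi\in M^s(\omega)$; using the conjugation $u_s(\cdot,\phi,\omega)=v_s(\cdot,\psi,\omega)+z(\theta_{s+\cdot}\omega)$, the stationarity property of the Ornstein--Uhlenbeck process $z$, and the invariance of $M^s$ already established in Theorem \ref{thm3.1}, one directly reads off that $u_s(\cdot,\phi,\omega)\in\tilde M^s(\theta_s\omega)$ for every $s\ge 0$. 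So the only new content is the smoothness of the graph representation.

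The key idea is to reuse the splitting performed in the proof of Theorem \ref{thm2.1}: decompose $z(\theta_\cdot\omega)=z_1+z_2$ with $z_1\in P^1P^sH$ and $z_2\in P^1P^uH$, so that $z_2=P^1P^u(P^s)^{-1}z_1$, and rewrite
\[
\tilde M^s(\omega)=\bigl\{\bar\zeta+\hat h^s(\bar\zeta,\omega)\;\big|\;\bar\zeta\in P^1P^sN^s(\omega)\bigr\},
\]
with
\[
\hat h^s(\bar\zeta,\omega)=h^s(\bar\zeta-z_1,\omega)+P^1P^u(P^s)^{-1}z_1.
\]
This is exactly the same graph rearrangement used in Theorem \ref{thm2.1}; it is still valid in the $C^k$ category because nothing in the algebraic identity depends on the regularity of $h^s$.

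The smoothness then follows by a direct chain-rule argument. In the variable $\bar\zeta$, the map $\bar\zeta\mapsto\bar\zeta-z_1$ is an affine translation (hence $C^\infty$), the second summand $P^1P^u(P^s)^{-1}z_1$ is independent of $\bar\zeta$, and $h^s(\cdot,\omega)$ is $C^k$ by the hypothesis on $M^s$ (which is justified by Theorem \ref{thm4.1}). Composition and addition give $\hat h^s(\cdot,\omega)\in C^k$, with derivatives $D_{\bar\zeta}^{j}\hat h^s(\bar\zeta,\omega)=D^{j}h^s(\bar\zeta-z_1,\omega)$ for $1\le j\le k$. The assumption that $z(\theta_\cdot\omega)$ is $C^k$ on $\mathcal{L}$ is used to guarantee that $z_1$ lies in the range over which $h^s$ is $C^k$ (and that the inverse transformation $u\mapsto u-z(\theta_\cdot\omega)$ preserves the $C^k$ category when pushed back from $\tilde M^s$ to $M^s$), so the identification is consistent.

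The main (and essentially only) potential obstacle is making sure that the translation $\bar\zeta\mapsto\bar\zeta-z_1$ keeps $\bar\zeta-z_1$ inside the domain $P^1P^sN^s(\omega)$ on which $h^s(\cdot,\omega)$ has been constructed. This however is built into the definition of $\tilde M^s(\omega)$ and follows from the random-set property of $N^s$ established in Theorem \ref{thm3.1}. Measurability of $\hat h^s$ in $\omega$ is inherited from the measurability of $h^s$ and of the splitting of $z(\theta_\cdot\omega)$, so that $\tilde M^s(\omega)$ is a random set in the sense of Castaing--Valadier. Combining invariance, the graph representation and $C^k$ regularity of $\hat h^s$, we conclude that $\tilde M^s(\omega)$ is a $C^k$ stable manifold of \eqref{1} on $\mathcal{L}$.
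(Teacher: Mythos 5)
Your proposal is correct and matches the paper's approach: the paper gives no separate proof of Theorem \ref{thm2.1b}, simply asserting it follows ``By Theorems \ref{thm2.1} and \ref{thm4.1},'' and your argument is exactly the natural expansion of that one-liner --- reuse the invariance and graph-rearrangement from Theorem \ref{thm2.1} and observe that $\hat h^{s}(\bar\zeta,\omega)=h^{s}(\bar\zeta-z_1,\omega)+P^1P^u(P^s)^{-1}z_1$ is $C^{k}$ since it is a $C^{k}$ map composed with an affine translation plus a $\bar\zeta$-independent term. Your explicit attention to the domain of $\hat h^{s}$ and to measurability is more careful than what the paper records.
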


\section {Existence and smoothness of unstable manifolds}
In the sequel, we are concerned about the existence and smoothness of unstable manifolds for \eqref{1}.
\begin{thm}\label{thm5.1} Assume that $L_f$ is the one defined in $\mathbf{Hypothesis \    A1-A3}$ and the constants $K, \alpha, \beta$ are defined in \eqref{3}. The following statements hold.\\
(i)If $K, \alpha, \beta$ satisfy
$$
K L_f \left(\frac{1}{\eta-\beta}+\frac{1}{\alpha-\eta}\right)<1,
$$
then the delayed random partial differential  equation \eqref{2.3} admits a Lipschitz unstable  invariant  manifold $M^{u}(\omega)=P_1 N^{u}(\omega)$, which can be represented by
$$
M^{u}(\omega)=\left\{\zeta+h^{u}(\zeta, \omega)\mid \zeta\in P_1 P^{u}N^{u}(\omega)\right\}.
$$
Here, $h^{u}(\cdot,\omega): P_1 P^{u}N^{u}(\omega) \rightarrow P_1 P^{s}N^{u}(\omega)$ is a Lipschitz continuous mapping that satisfies $h^{u}(0,\omega)=0$.\\
(ii) If $\beta<k \eta<\alpha$ and
$$
K L_f\left(\frac{1}{i \eta-\beta}+\frac{1}{\alpha-i \eta}\right)<1 \quad \text { for all } 1 \leqslant i \leqslant k,
$$
then the invariant stable manifold $M^{u}(\omega)$ for the delayed random partial differential equation  \eqref{2.3}  is   $C^{k}$, that is, $h^u(\zeta, \omega)$ is $C^{k}$ in $\zeta$.
\end{thm}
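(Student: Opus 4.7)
The plan is to mirror the arguments in Theorems 3.1 and 4.1, replacing the forward-in-time setup used for the stable manifold by a backward-in-time setup appropriate for the unstable manifold. The key structural ingredient that makes this possible is Lemma \ref{lem3.1}(ii): the restriction $\tilde{S}(t)|_{H^u}$ is an isomorphism, so $\tilde{S}(t)P^u$ extends to negative times with the bound $\|\tilde{S}(t)P^u x\|\le K e^{\alpha t}\|x\|$ for $t\le 0$ already recorded in \eqref{3}. This lets us set up the Lyapunov-Perron integral equation on $(-\infty,0]$.

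First I would introduce, in parallel with $C_{\eta}^{+}$, the Banach space
\[
C_{\eta}^{-}=\Bigl\{\phi:(-\infty,0]\to H\ \text{continuous}:\sup_{t\le 0}e^{-\eta t}\|\phi(t)\|<\infty\Bigr\},
\]
together with its projection $P_1 C_\eta^-$, and define
\[
N^{u}(\omega)=\bigl\{\psi\in H\mid\text{there exists }V(\cdot)\in C_{\eta}^{-}\text{ solving \eqref{2.4} backward with }V(0)=\psi\bigr\}.
\]
Applying Theorems 3.1-3.2 of \cite{DLS04} to the auxiliary equation \eqref{2.4} under the spectral gap inequality in (i), one obtains that $N^u(\omega)$ is a Lipschitz invariant unstable manifold of \eqref{2.4} in $H$, characterized as the set of $\psi\in H$ for which there is $V(\cdot)\in C_\eta^-$ with
\[
V(t)=\tilde{S}(t)\hat{\zeta}+\int_{-\infty}^{t}P^{u}\tilde{S}(t-s)F(s,\theta_s\omega,V(s))\,ds+\int_{0}^{t}P^{s}\tilde{S}(t-s)F(s,\theta_s\omega,V(s))\,ds,
\]
where $\hat{\zeta}=P^u\psi\in H^u$. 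This requires the contraction estimate for the Lyapunov-Perron operator, whose constant is exactly $KL_f(\tfrac{1}{\eta-\beta}+\tfrac{1}{\alpha-\eta})<1$.

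Next, pushing this down to $\mathcal{L}$ by applying $P_1$ and invoking Lemma \ref{lem2.3}, I would set $t=0$ in the integral equation to get
\[
v_0(\cdot,\omega,\phi)=\zeta+h^{u}(\zeta,\omega),\qquad h^{u}(\zeta,\omega):=\int_{-\infty}^{0}P_1 P^{s}\tilde{S}(-s)F(s,\theta_s\omega,P_1^{-1}v_s(\cdot,\omega,\zeta))\,ds,
\]
with $\zeta=P_1 P^u\psi\in P_1 P^u N^u(\omega)$, and verify $h^u(0,\omega)=0$ from $F(s,\theta_s\omega,0)\ne 0$ being handled by the conjugation shift exactly as in the stable case. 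The measurability of $\omega\mapsto h^u(\cdot,\omega)$ and the fact that $M^u(\omega)=P_1 N^u(\omega)$ is a random set then follow by Castaing-Valadier (\cite{CV}, Theorem III.9) and the argument at the end of the proof of Theorem \ref{thm3.1}. Invariance of $M^u(\omega)$ under the induced cocycle is inherited from the invariance of $N^u(\omega)$ under $\Phi$, using that $P_1$ commutes with the projection formalism in the same way as in Theorem \ref{thm3.1}.

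For part (ii), I would proceed by induction on $k$, exactly mimicking Theorem \ref{thm4.1}. The base case $k=1$ reduces to showing that the analogue $I^u$ of the Lyapunov-Perron operator $I^s$ (with the integrals now ranging over $(-\infty,t]$ for the $P^u$-piece and over $[0,t]$ for the $P^s$-piece) is a contraction on $P_1 C_{\eta-\sigma}^{-}$ for some small $\sigma>0$, from which differentiability of $v_t(\cdot,\omega,\zeta)$ in $\zeta$ follows by inverting $\mathrm{Id}-S$ with $S$ the associated derivative operator. The inductive step uses the assumption $\beta<k\eta<\alpha$ together with the spectral gap inequalities for every $1\le i\le k$ to ensure that all multilinear integrals appearing in $D_\zeta^{k-1} v_t$ are well-defined and that the corresponding fixed-point operator remains a contraction. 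The splitting of the remainder term $J$ into pieces $J_1,\ldots,J_5$ via cutoff parameters $N,\bar N$ carries over unchanged, with the single adjustment that the roles of $[0,t]$ and $[N,\infty)$ are interchanged with $(-\infty,0]$ and $(-\infty,-\bar N]$ on the unstable side.

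The main obstacle, and the reason the proof is more than cosmetic, is managing the backward-time integral $\int_{-\infty}^{t}P^u\tilde{S}(t-s)(\cdot)\,ds$: one has to use the isomorphism property from Lemma \ref{lem3.1}(ii) to give meaning to $\tilde{S}(t-s)P^u$ when $t-s<0$, verify that the resulting operator still satisfies the pseudo-exponential dichotomy bound in \eqref{3} for negative time, and confirm that the nonlinearity $F(s,\theta_s\omega,\cdot)$ composed with the stationary shift $z(\theta_s\omega)$ remains integrable against $e^{\alpha(t-s)}$ as $s\to-\infty$. Once this backward-time machinery is set up, every remaining estimate is a direct mirror image of the estimates already carried out in the proofs of Theorems \ref{thm3.1} and \ref{thm4.1}, so I would organize the write-up as "by the same arguments as in the proof of Theorem \ref{thm3.1} (resp.\ Theorem \ref{thm4.1}) applied to $C_\eta^-$ and the integral equation above, we conclude".
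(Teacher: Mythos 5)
Your overall strategy --- introduce $C_\eta^-$ and $N^u(\omega)$, apply the unstable-manifold results of \cite{DLS04} to the auxiliary equation \eqref{2.4}, project by $P_1$ via Lemma \ref{lem2.3}, read off $h^u$ at $t=0$, and rerun the induction of Theorem \ref{thm4.1} for smoothness --- is exactly the route the paper takes (the paper only sketches it, deferring all estimates to Sections 3 and 4). However, your Lyapunov--Perron equation has the two projections interchanged, and this is not cosmetic. The correct backward-time representation for $V\in C_\eta^-$ is
\[
V(t)=\tilde S(t)P^u\psi+\int_0^t P^u\tilde S(t-s)F(s,\theta_s\omega,V(s))\,ds+\int_{-\infty}^{t}P^s\tilde S(t-s)F(s,\theta_s\omega,V(s))\,ds,\qquad t\le 0.
\]
Here the $P^u$-component is integrated over the finite interval between $t$ and $0$, where $\tilde S(t-s)P^u$ with $t-s\le 0$ makes sense through the isomorphism of Lemma \ref{lem3.1}(ii), while the $P^s$-component comes from letting $r\to-\infty$ in the variation of constants formula started at time $r$, the boundary term vanishing because $\|\tilde S(t-r)P^sV(r)\|\le Ke^{\beta(t-r)}e^{\eta r}\to0$ when $\eta>\beta$. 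In your version, $\int_{-\infty}^tP^u\tilde S(t-s)F\,ds$ diverges: the only available bound is $Ke^{\alpha(t-s)}$ together with $\|V(s)\|\lesssim e^{\eta s}$, so the integrand grows like $e^{(\eta-\alpha)s}$ as $s\to-\infty$ since $\eta<\alpha$. The other term $\int_0^tP^s\tilde S(t-s)F\,ds$ is ill-defined for $t<0$ because $\tilde S$ is not invertible on $H^s$. Moreover, evaluating your equation at $t=0$ forces $P^sV(0)=0$, i.e.\ $h^u\equiv0$, contradicting the graph representation you then assert.

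The remainder of your argument is consistent with the corrected equation rather than with the one you displayed: your formula $h^u(\zeta,\omega)=\int_{-\infty}^0P_1P^s\tilde S(-s)F\,ds$ is the right one, the contraction constant $KL_f\bigl(\frac1{\eta-\beta}+\frac1{\alpha-\eta}\bigr)$ indeed arises from $\int_{-\infty}^te^{(\beta-\eta)(t-s)}\,ds$ and $\int_t^0e^{(\alpha-\eta)(t-s)}\,ds$, and the measurability, invariance and smoothness steps transfer essentially verbatim. So the repair is local. But as written the fixed-point equation on which the whole construction rests is wrong, and your stated concern about ``integrability of $F$ against $e^{\alpha(t-s)}$ as $s\to-\infty$'' indicates the swap propagated into your analysis rather than being a mere transcription slip; please restate the integral equation with the projections in the correct positions before invoking the contraction argument.
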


\begin{thm}\label{thm5.2}
Assume that $M^{u}(\omega)$  is a Lipschitz unstable  manifold of the stochastic partial differential equation \eqref{2.3} on $\mathcal{L}$, then $\tilde{M}^{u}(\omega)=\left\{\psi(\xi)+z(\theta_\cdot \omega) \mid \psi \in {M}^{s}(\omega)\right\}$ is a Lipschitz unstable  manifold of the stochastic retarded partial differential equation \eqref{1} on $\mathcal{L}$. Moreover, if $M^{u}(\omega)$  is  $C^{k}$ and $z(\theta_\cdot \omega)$ is $C^{k}$ on $\mathcal{L}$, then $\tilde{M}^{u}(\omega)=\left\{\psi(\xi)+z(\theta_\cdot \omega) \mid \psi \in {M}^{u}(\omega)\right\}$ is a $C^{k}$ unstable  manifold of the stochastic retarded partial differential equation \eqref{1} on $\mathcal{L}$.
\end{thm}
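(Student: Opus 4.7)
The plan is to mirror the argument of Theorem \ref{thm2.1}, adapted to the unstable setting, and then bolt on the $C^k$ statement by checking that each step preserves smoothness. Since Theorem \ref{thm5.1} already produces a Lipschitz (resp.\ $C^k$) unstable manifold $M^u(\omega)$ for the random delayed equation \eqref{2.3}, and the passage from \eqref{2.3} to the original SPDE \eqref{1} is just the fiberwise affine translation $u_t = v_t + z(\theta_{t+\cdot}\omega)$, the translation should carry all relevant structural properties across. Concretely, I would proceed in three steps.

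First, I would establish invariance of $\tilde M^u(\omega)$ under the cocycle associated with \eqref{1}. Given $\phi \in \tilde M^u(\omega)$, write $\phi = \psi + z(\theta_\cdot \omega)$ with $\psi \in M^u(\omega)$. Using the conjugation $u_s(\cdot,\phi,\omega) = v_s(\cdot,\psi,\omega) + z(\theta_{s+\cdot}\omega)$ together with stationarity of the Ornstein--Uhlenbeck process (so that $z(\theta_{s+\cdot}\omega)$ has the same law/structure as $z(\theta_\cdot(\theta_s\omega))$) and the invariance $v_s(\cdot,\psi,\omega)\in M^u(\theta_s\omega)$ guaranteed by Theorem \ref{thm5.1}, I obtain $u_s(\cdot,\phi,\omega)\in \tilde M^u(\theta_s\omega)$. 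The only subtlety compared with the stable case is the time direction for the unstable manifold, but since the cocycle restricted to $N^u$ is invertible (Lemma \ref{lem3.1}(ii)) this causes no difficulty: the relevant invariance is a fiberwise algebraic identity.

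Second, I would exhibit $\tilde M^u(\omega)$ as the graph of a Lipschitz (resp.\ $C^k$) mapping over $P_1 P^u N^u(\omega)$, exactly as in the proof of Theorem \ref{thm2.1}. Decompose $z(\theta_\cdot\omega) = z_1 + z_2$ with $z_1 \in P_1 P^u H$ and $z_2 \in P_1 P^s H$; by Lemma \ref{lem3.1} the restriction $\tilde S(t)|_{H^u}$ is an isomorphism, so the two components of $z$ are uniquely determined and $z_2$ can be expressed as $P_1 P^s (P^u)^{-1} z_1$ at $t=0$. Then
\begin{align*}
\tilde M^u(\omega) &= \{\zeta + h^u(\zeta,\omega) + z_1 + z_2 \mid \zeta \in P_1 P^u N^u(\omega)\} \\
&= \{\bar\zeta + \hat h^u(\bar\zeta,\omega) \mid \bar\zeta \in P_1 P^u N^u(\omega)\},
\end{align*}
with $\bar\zeta := z_1 + \zeta$ and $\hat h^u(\bar\zeta,\omega) := h^u(\bar\zeta - z_1,\omega) + P_1 P^s(P^u)^{-1} z_1$. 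Since $\bar\zeta \mapsto \bar\zeta - z_1$ is an affine isometry and $h^u(\cdot,\omega)$ is Lipschitz, $\hat h^u(\cdot,\omega)$ is Lipschitz with the same constant, so $\tilde M^u(\omega)$ is a Lipschitz unstable manifold for \eqref{1}.

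Third, for the $C^k$ statement I would simply note that if $h^u(\cdot,\omega)$ is $C^k$ (Theorem \ref{thm5.1}(ii)) and $z(\theta_\cdot\omega) \in \mathcal L$ is $C^k$ in the sense required (so that $z_1, z_2$ depend $C^k$-smoothly on $\omega$), then the translation $\bar\zeta \mapsto h^u(\bar\zeta - z_1,\omega) + P_1 P^s(P^u)^{-1} z_1$ is a composition of $C^k$ maps, hence $C^k$. Measurability of $\omega \mapsto \hat h^u(\bar\zeta,\omega)$ follows from that of $h^u$ and of $z$, so $\tilde M^u(\omega)$ is a random set by the same Castaing--Valadier argument used in Theorem \ref{thm3.1}. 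I expect the only genuinely delicate point to be the verification of the graph representation: one must confirm that the decomposition $z = z_1 + z_2$ with $z_2$ determined by $z_1$ via the projections is consistent with the definition of $N^u(\omega)$, i.e.\ that translating by a fixed element of $\mathcal L$ does not destroy the exponential growth condition defining $C^-_\eta$; this follows from the sublinear growth of $t\mapsto z(\theta_t\omega)$ already invoked at the end of the proof of Theorem \ref{thm4.1}.
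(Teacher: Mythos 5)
Your proposal is correct and follows essentially the same route as the paper, which proves Theorem \ref{thm5.2} by declaring it analogous to Theorem \ref{thm2.1}: invariance via the conjugation $u_s=v_s+z(\theta_{s+\cdot}\omega)$ and the invariance of $M^u(\omega)$, then the graph representation by splitting $z(\theta_\cdot\omega)=z_1+z_2$ along $P_1P^uH\oplus P_1P^sH$ and translating $h^u$ to $\hat h^u(\bar\zeta,\omega)=h^u(\bar\zeta-z_1,\omega)+z_2$, with the $C^k$ case following since translation preserves smoothness. Your additional remarks on measurability and on the role of the sublinear growth of $t\mapsto z(\theta_t\omega)$ are consistent with the paper's (omitted) details.
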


The proof of Theorems \ref{thm5.1} and \ref{thm5.2} are quite similar to the results in Section 3 with only a few modifications. In the sequel, we point out the differences and outline the proof procedure while omitting the details, which is the same as Section 3 and Section 4.

Corresponding to space $C_{\eta}^{+}$, we define the Banach space for each $\beta<\eta<\alpha$,
$C_{\eta}^{-}=\left\{\phi:(-\infty, 0] \rightarrow H \mid \phi\right.$ is continuous and $\left.\sup _{t \leqslant 0} e^{-\eta t}\|\phi\|<\infty\right\}$
with the norm
$$
|\phi|_{C_{\eta}^{-}}=\sup _{t \leqslant 0} e^{-\eta t}|\phi_t|.
$$
Denote by
$$
N^{u}(\omega)=\left\{\psi \in H  \mid V\left(t, \omega, \psi\right) \in C_{\eta}^{-}\right\}.
$$

In order to show that $M^{u}(\omega)=\left\{\zeta+h^{u}(\zeta, \omega)\mid \zeta\in P_1P^u N^{u}(\omega)\right\}$ is the Lipschitz stable  invariant unstable manifold of \eqref{2.3}. It follows from the proof of Theorems 5.1 and 5.3 in \cite{DLS04} that   $\psi \in N^{u}(\omega)$ if and only if there exists a function $V(\cdot) \in C_{\eta}^{+}$ with $V(0)=(\phi, \phi(0))\triangleq\psi$ such that
\begin{equation}\label{4.1}
\begin{aligned}
V(t) =&  \tilde{S}(t)\hat{\zeta}+\int_{0}^{t}P^{u} \tilde{S}(t-s)F(s, \theta_s\omega, V(s))d s+\int_{\infty}^{t} P^{s}  \tilde{S}(t-s)  F(s,\theta_s\omega, V(s)) ds, \\
\end{aligned}
\end{equation}
where $\hat{\zeta}=P^u \psi\in H^u$.
By Lemma \ref{lem2.3}, \eqref{4.1} is equivalent to
\begin{equation}\label{4.2}
\begin{aligned}
v_t(\cdot,\omega, \phi)=&P_1V(t,\omega,\psi) \\
=&P_1 \tilde{S}(t)\hat{\zeta}+\int_{0}^{t}P_1P^{u} \tilde{S}(t-s)F(s,\theta_s\omega, P_1^{-1}v_s)d s+\int_{\infty}^{t}P_1P^{s} \tilde{S}(t-s)  F(s,\theta_s\omega, P_1^{-1}v_s) ds. \\
\end{aligned}
\end{equation}
Hence, $\psi \in N^{u}(\omega)$ if and only if there exists a function $v_t \in P_1C_{\eta}^{+}$ with $v_0=\phi$ such that \eqref{4.2} holds.
Taking $t=0$ in \eqref{4.2} we have
\begin{equation}\label{4.3a}
\begin{aligned}
v_0(\cdot,\omega, \phi)=&P_1V(0,\omega,\psi) \\
=&P_1\hat{\zeta}+\int_{\infty}^{0}P_1P^{s}  \tilde{S}(-s)  F(s,\theta_s\omega, P_1^{-1}v_s) ds \\
=&\zeta+h^{u}(\zeta, \omega),
\end{aligned}
\end{equation}
where $\zeta\in P_1P^uN^U(\omega)$ and $h^{u}(\zeta, \omega)$ is defined by
$$
h^{u}(\zeta, \omega)=\int_{\infty}^{0}P_1 P^{s} \tilde{S}(-s)  F(s,\theta_s\omega,P_1^{-1}v_s(\cdot,\omega,\zeta))\mathrm{d} s
$$
satisfying $h^{u}(0, \omega)=0$. Moreover,   $h^{u}$ is measurable. In the same fashion as the case for the smoothness of stable manifold, one may show that $h^{u}$ is $C^{k}$ when the assumptions in Theorem \ref{thm5.2} hold.

\section{Applications}
In this part, we apply the obtained results to the following stochastic single-species age-structured population model
\begin{equation}\label{6.1}
\begin{cases}\partial_t u(x, t)=\Delta u(x, t)-\mu u(x, t)-\delta u(x, t-\tau)+f(u(x, t-\tau))+\sum_{j=1}^{m} g_{j}(x)\frac{\mathrm{d} w_{j}(t)}{\mathrm{d}t}, & x \in[0, \pi], t \geq 0, \\ u(0, t)=u(\pi, t)=0, & t \geq 0, \\ u(x, t)=\phi(x, t), x \in[0, \pi], & t \in[-\tau, 0].\end{cases}
\end{equation}
This equation describes the dynamics of a single-species population distributed over the interval $[0, \pi]$, where $u(x, t)$ represents the population size at $(x, t)$, while $\Delta u(x, t)$ represents the spatial diffusion of the species,  $\mu$ and $\delta$ are positive constants measuring the death rates of mature and  immature individuals respectively, $f$ is the birth function and $\sum_{j=1}^{m} g_{j}(x)\frac{\mathrm{d} w_{j}(t)}{\mathrm{d}t}$ is the noise, respectively.

Let  $z\left(\theta_{t} \omega\right)(x)=\sum_{j=1}^{m} g_{j}(x) z_{j}\left(\theta_{t} \omega_{j}\right)$ with $z_{j}\left(\theta_{t} \omega_{j}\right)$ being defined by \eqref{2.2} and take the transformation $v(x,t)=u(x,t)-z\left(\theta_{t} \omega\right)(x)$. Then,  simple computation gives
\begin{equation}\label{6.2}
\displaystyle \frac{dv(x,t)}{dt}=\displaystyle  \Delta v(x,t)-\mu v(x,t)-\delta v_t(x,t-\tau) -\delta z(\theta_{t-\tau}\omega)(x)+f\left(v(x,t-\tau)+z(\theta_{t-\tau}\omega)\right)+\Delta z(\theta_{t}\omega)(x).
\end{equation}
In the following, we consider the linear part of  \eqref{6.2}, i.e., the following linear random partial functional differential equation
\begin{equation}\label{5.6}
\displaystyle \frac{d\tilde{v}(x,t)}{dt}=\displaystyle  \Delta\tilde{v}(x,t)-\mu \tilde{v}(x,t)-\delta \tilde{v}(x,t-\tau).
\end{equation}
Define $\mathbb{X}:=L^2(0, \pi)$ and $\mathcal{L}\triangleq L^2([-\tau,0], \mathbb{X})$. Then, the operator $B:=\Delta-\mu$ with Dirichlet boundary conditions and $D(B):=H_0^1(0, \pi) \cap H^2(0, \pi)$. Hence, it generates an analytic and compact semigroup $(T(t))_{t \geq 0}$ on $\mathbb{X}$
 The operator $L: W^{1,2}([-1,0], \mathbb{X}) \rightarrow \mathbb{X}$ defined as $L\phi:=-\delta \phi(-\tau)$ and hence it is clear that $\mathbf{Hypothesis \ A1}$ holds. By Theorem 3.29 and Remark 4.1 in \cite{BP}, $\mathbf{Hypothesis \ A2}$ also holds.

Define
$$
\tilde{A}:=\left(\begin{array}{cc}
\frac{d}{d t} & 0 \\
L & B
\end{array}\right),
$$
with domain
$$
\begin{aligned}
D(\tilde{A})=\{(\phi, h) \in H: \phi & \text { is differentialble on }[-\tau, 0],  \dot{\phi} \in \mathcal{L} \text { and } h=\phi(0) \in D(B) \}
\end{aligned}
$$
Since $B$ is compact, by Lemma 4.5  in \cite{BP}, the resolvent $R\left(\lambda, \Delta-\mu I+L_\lambda\right)$ of $\tilde{A}$ is compact for all $\lambda \in \rho\left(\Delta-\mu I+L_\lambda\right)$. Moreover, the spectrum of $\tilde{A}$
has only point spectrum, which is the root of the following characteristic equation
 \begin{equation}\label{5.7}
\displaystyle \lambda+\delta e^{-\lambda \tau}=-n^2 -\mu, \quad n \in \mathbb{N}.
\end{equation}
By Theorem 1.10 in \cite{WJ}, for each given $n$, there exists a real number $\beta$ such that $\operatorname{Re} \lambda \leq \beta$ for all $\lambda \in \sigma (\tilde{A})$. Moreover, if $\alpha$ is a given real number, then there exists only a finite number of $\lambda \in \sigma (\tilde{A})$ such that $\beta \leq \operatorname{Re} \lambda$, implying that $\mathbf{Hypothesis \ A3}$ holds. Therefore, it follows from Lemma \ref{lem3.1} that  the semigroup $\tilde{S}(t)$ satisfies  a  pseudo exponential dichotomy condition, i.e. for the above $\alpha$ and $\beta$ there exist $K$ and a continuous projection $P^{u}$ on $H$ such that
\begin{equation}\label{5.7}
\begin{aligned}
\left\|\tilde{S}(t)P^{u} x\right\| & \leq K e^{\alpha t}\|x\|, & & t \leqslant 0, \\
\left\|\tilde{S}(t)P^{s} x\right\| & \leq K e^{\beta t}\|x\|, & & t \geq 0,
\end{aligned}
\end{equation}
where $P^{s}=I-P^{u}$, $H^{s}=P^{s} H$ and $H^{u}=P^{u}H$. Hence, by the results in Sections 3 and 4, we have the following results.

\begin{thm}\label{thm6.1} Assume that $f$ is globally Lipschitz  and $K, \alpha, \beta$ satisfy
$$
K L_f \left(\frac{1}{\eta-\beta}+\frac{1}{\alpha-\eta}\right)<1.
$$
Then the SPFDE \eqref{6.1} admits both Lipschitz stable and unstable  invariant  manifolds. Moreover,  if    $f$ is $C^k$, $\beta<k \eta<\alpha$ and
$$
K L_f\left(\frac{1}{i \eta-\beta}+\frac{1}{\alpha-i \eta}\right)<1 \quad \text { for all } 1 \leqslant i \leqslant k,
$$
then the invariant stable and unstable manifold  for the SPFDE  \eqref{6.1}  is   $C^{k}$.
\end{thm}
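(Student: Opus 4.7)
The plan is to recognize Theorem \ref{thm6.1} as a verification-and-invocation statement: once Hypotheses A1--A4 are checked for the concrete operators arising from \eqref{6.1}, the conclusions follow directly from Theorems \ref{thm3.1}, \ref{thm2.1}, \ref{thm4.1}, \ref{thm2.1b}, \ref{thm5.1}, and \ref{thm5.2}. So the proof proposal is organized as a hypothesis-checking argument followed by a transportation argument from the auxiliary random PDE back to the stochastic system \eqref{6.1}.

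First, I would verify Hypothesis A1 by recalling that $B=\Delta-\mu$ with Dirichlet boundary conditions on $\mathbb{X}=L^2(0,\pi)$ is self-adjoint with compact resolvent, so it generates an analytic and compact $C_0$-semigroup; and $L\phi=-\delta\phi(-\tau)$ is plainly a bounded linear functional from $\mathcal{L}$ to $\mathbb{X}$. For Hypothesis A2, I would invoke directly Theorem 3.29 and Remark 4.1 in \cite{BP}, as the paper already indicates: the analyticity of $T(t)$ and the point-evaluation structure of $L$ yield the required estimate with $q(t)\to 0$ as $t\to 0^+$. Hypothesis A4 is immediate from the assumption that $f$ is globally Lipschitz (with $f(0)=0$, or else one absorbs $f(0)$ into the forcing term, which does not affect the Lyapunov--Perron construction).

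The step that requires real work is Hypothesis A3. Here I would analyze the transcendental characteristic equation
\begin{equation*}
\lambda+\delta e^{-\lambda\tau}=-n^2-\mu,\qquad n\in\mathbb{N},
\end{equation*}
coming from the decomposition of $\mathbb{X}$ along the eigenfunctions $\sin(nx)$. Using Theorem 1.10 in \cite{WJ}, for each fixed $n$ the roots form a countable set with real parts bounded above, with only finitely many lying to the right of any vertical line; letting $n$ vary, the contribution $-n^2-\mu$ pushes the bulk of the spectrum further left, so for any prescribed $\alpha>\beta$ there are only finitely many $\lambda$ with $\operatorname{Re}\lambda>\beta$, and one may choose $\alpha,\beta$ (say, straddling an empty vertical strip between two consecutive real-part values of roots) to ensure $\sigma(\tilde A)\cap(\beta,\alpha)=\varnothing$. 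This yields A3, and Lemma \ref{lem3.1} then supplies the pseudo-exponential dichotomy \eqref{5.7} with some constant $K$.

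With A1--A4 verified, I would invoke Theorem \ref{thm3.1} and Theorem \ref{thm5.1}(i) to obtain Lipschitz stable and unstable manifolds $M^s(\omega), M^u(\omega)$ for the auxiliary random PDE \eqref{2.3} under the spectral gap-Lipschitz condition $KL_f\bigl(\tfrac{1}{\eta-\beta}+\tfrac{1}{\alpha-\eta}\bigr)<1$. Then Theorem \ref{thm2.1} (respectively Theorem \ref{thm5.2}) transports these back to invariant manifolds $\tilde M^s(\omega), \tilde M^u(\omega)$ of \eqref{6.1} via the inverse Ornstein--Uhlenbeck conjugation $u=v+z(\theta_\cdot\omega)$. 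For the $C^k$ statement, I would note that the linearity of $z(\theta_t\omega)(x)=\sum_j g_j(x)z_j(\theta_t\omega_j)$ in its $\mathcal{L}$-argument makes the map $\omega\mapsto z(\theta_\cdot\omega)$ trivially $C^k$ on $\mathcal{L}$, so under the strengthened gap condition $KL_f\bigl(\tfrac{1}{i\eta-\beta}+\tfrac{1}{\alpha-i\eta}\bigr)<1$ for $1\le i\le k$, Theorems \ref{thm4.1}, \ref{thm2.1b}, \ref{thm5.1}(ii), and \ref{thm5.2} upgrade the manifolds to $C^k$. The main obstacle is therefore the spectral analysis for A3; once that is secured, the theorem is a clean application of the general theory.
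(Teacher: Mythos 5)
Your proposal is correct and follows essentially the same route as the paper: verify Hypotheses A1--A2 via the analytic compact semigroup generated by $\Delta-\mu$ and the cited results of B\'{a}tkai--Piazzera, obtain A3 from the characteristic equation $\lambda+\delta e^{-\lambda\tau}=-n^2-\mu$ together with Theorem 1.10 of Wu so that Lemma \ref{lem3.1} yields the dichotomy \eqref{5.7}, and then invoke the general theorems of Sections 3 and 4 plus the Ornstein--Uhlenbeck conjugation. Your additional remarks on A4 (absorbing $f(0)$) and on the $C^k$ regularity of $\omega\mapsto z(\theta_\cdot\omega)$ are minor points the paper leaves implicit, not a different method.
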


\section{Summary}
In this paper, we have obtained the existence and smoothness of both stable and unstable manifolds for the stochastic retarded partial differential equation \eqref{1} with an additive noise. Our results can be regarded as a first attempt to investigate the invariant structure of stochastic partial functional differential equations of parabolic type. However, there exist some stochastic hyperbolic partial differential equations with delay, such as stochastic delayed wave equations, that are quite difficult to obtain the variation of  constants formula in the natural phase space and needless to say  split the phase space into stable and unstable subspaces. Thus, we will study the invariant manifolds for stochastic wave equations with delay by perturbation methods. Furthermore, the natural phase space for stochastic partial functional differential equations is Banach space and thus establishing the general framework for analyzing existence and smoothness of invariant structures for   RDSs in Banach space is of paramount significance. The noise discussed in this paper is the standard two-sided real-valued Wiener process and we show the equation generates a RDS by a conjugation. Nevertheless, for general noise in infinite dimensional spaces, under what conditions do the perturbed partial functional differential equations generate RDSs and have stable or unstable manifolds remain largely open, which will also be studied in the near future.
\section{Acknowledgement}
This work was jointly supported by the National Natural Science Foundation of China (62173139), China Postdoctoral Science Foundation (2019TQ0089), Hunan Provincial Natural Science Foundation of China (2020JJ5344) the Science and Technology Innovation Program of Hunan Province (2021RC4030), the Scientific Research Fund of Hunan Provincial Education Department (20B353). \\ This work was completed when Wenjie Hu was visiting the Universidad de Sevilla as a visiting scholar, and he would like to thank the staff in the Facultad de Matem\'{a}ticas for
their hospitality  and thank the university for its excellent facilities and support during his stay.\\
The research of T. Caraballo has been partially supported by Spanish Ministerio de Ciencia e
Innovaci\'{o}n (MCI), Agencia Estatal de Investigaci\'{o}n (AEI), Fondo Europeo de
Desarrollo Regional (FEDER) under the project PID2021-122991NB-C21.\\
The authors would also like to thank Professor Weinian Zhang for valuable discussions and suggestions.
\small

\end{document}